\newcommand{\sg}{\sigma}
\def\des{\operatorname{des}}
\def\Des{\operatorname{Des}}
\def\asc{\operatorname{asc}}
\def\Asc{\operatorname{Asc}}
\def\inv{\operatorname{inv}}
\def\Inv{\operatorname{Inv}}
\def\maj{\operatorname{maj}}
\def\minimaj{\operatorname{minimaj}}
\def\Obj{\operatorname{Obj}}
\def\stat{\operatorname{stat}}
\def\dist{D}
\newcommand{\dists}[3] {D^{#1}_{#2}(#3)}
\newcommand{\distosp}[4] {D^{#1}_{#2, #3}(#4)}
\def\rlmaj{\operatorname{rlmaj}}
\newcommand{\qbinom}[2] { {#1 \brack #2}_{q} }
\newcommand{\qint}[1] { [#1]_{q} }
\newcommand{\qstir}[3] {S_{#1, #2}(#3)}
\newcommand{\allqstir}[4] {S_{#1, #2}^{(#4)}(#3)}
\newcommand{\osp}[2] {\mathcal{OP}_{#1, #2}}
\newcommand{\ospn}[1] {\mathcal{OP}_{#1}}
\newcommand{\op}[1] {#1}
\def\S{\mathfrak{S}}
\def\multiset#1#2{\ensuremath{\left(\kern-.3em\left(\genfrac{}{}{0pt}{}{#1}{#2}\right)\kern-.3em\right)}}
\newcommand{\qmah}[3] {\dist_{#1, #2}(#3)}
\def\syt{\operatorname{SYT}}
\def\Dinv{\operatorname{Dinv}}
\def\dinv{\operatorname{dinv}}
\def\shape{\operatorname{shape}}
\def\el{\ell}
\newtheorem{lemma}{Lemma}[subsection]
\newtheorem{prop}{Proposition}[subsection]
\newtheorem{cor}{Corollary}[subsection]
\newtheorem{thm}{Theorem}
\newtheorem{conj}{Conjecture}
\author[A.\ T.\ Wilson]{Andrew Timothy Wilson}
\title[Equidistribution on ordered multiset partitions]{An extension of MacMahon's Equidistribution Theorem to ordered multiset partitions}
\address{Department of Mathematics \\ University of Pennsylvania \\ 209 South 33rd Street \\ Philadelphia, PA 19104-6395}
\keywords{inversion number, major index, permutation statistics, insertion method, ordered multiset partitions, Macdonald polynomials}
\thanks{Partially supported by a National Defense Science and Engineering Graduate Fellowship and a National Science Foundation Mathematical Sciences Postdoctoral Research Fellowship. Much of this work was done while the author was a graduate student at UC San Diego.}
\email{atwilson0328@gmail.com}
\begin{document}
\begin{abstract}
A classical result of MacMahon states that inversion number and major index have the same distribution over permutations of a given multiset. In this work we prove a strengthening of this theorem originally conjectured by Haglund. Our result can be seen as an equidistribution theorem over the ordered partitions of a multiset into sets, which we call ordered multiset partitions. Our proof is bijective and involves a new generalization of Carlitz's insertion method. This generalization leads to a new extension of Macdonald polynomials for hook shapes. We use our main theorem to show that these polynomials are symmetric and we give their Schur expansion.
\end{abstract}

\maketitle

\tableofcontents

\section{Introduction}
\label{sec:intro}

\subsection{Permutation statistics}
\label{ssec:perm-stats}
Given a composition $\alpha$ of length $n$ (i.e.\ a vector of positive integers of length $n$), we let $\S_{\alpha}$ be the set of all permutations of the multiset $\{i^{\alpha_i} : 1 \leq i \leq n\}$. For a permutation $\sg \in \S_{\alpha}$, written in one-line notation, the \emph{descent} and \emph{ascent sets} of $\sigma$ are
\begin{equation*}
\begin{aligned}[c]
\Des(\sigma) = \{i  : \sigma_{i} > \sigma_{i+1} \}
\end{aligned}
\qquad
\begin{aligned}[c]
\Asc(\sigma) = \{i  : \sigma_{i} < \sigma_{i+1} \}
\end{aligned}
\end{equation*}
The \emph{inversions} of $\sigma$ are the pairs
\begin{align*}
\Inv(\sigma) &= \{(i, j) : 1 \leq i < j \leq n, \ \sigma_{i} > \sigma_{j} \} .
\end{align*}
It will be convenient to refine the set of inversions in the following manner:
\begin{equation*}
\begin{aligned}[c]
\Inv^{i, \Box} &= \{(i, j): i < j \leq n, \ \sigma_{i} > \sigma_{j} \} 
\end{aligned}
\qquad
\begin{aligned}[c]
\Inv^{\Box, j} = \{(i, j): 1 \leq i < j, \ \sigma_{i} > \sigma_{j} \} .
\end{aligned}
\end{equation*}
These are the elements of $\Inv(\sg)$ whose first (resp.\ second) coordinate is $i$ (resp.\ $j$). These sets allow us to define several statistics on $\S_{\alpha}$:
\begin{equation*}
\des(\sg) = |\Des(\sigma)| \quad \asc(\sigma) = |\Asc(\sigma)| \quad \inv(\sigma) = |\Inv(\sigma)| \quad  \maj(\sg) = \sum_{i \in \Des(\sg)} i .
\end{equation*}
These statistics are known as the \emph{descent number}, \emph{ascent number}, \emph{inversion number}, and \emph{major index} of $\sigma$, respectively. We will also make use of two refinements of inversion number:
\begin{align*}
 \inv^{i, \Box}(\sigma) = |\Inv^{i, \Box}(\sigma)| \qquad \inv^{\Box, j}(\sigma) = |\Inv^{\Box, j}(\sigma)| .
\end{align*} 

Given a statistic $\stat$ on $\S_\alpha$, the \emph{distribution} of $\stat$ over $\S_\alpha$ is the polynomial
\begin{align*}
\dists{\stat}{\alpha}{q} &= \sum_{\sigma \in \S_\alpha} q^{\stat(\sigma)} .
\end{align*}
When $\alpha = 1^n$, we will simply write $\dists{\stat}{n}{q}$. Two statistics, say $\stat$ on $\Obj$ and $\stat^{\prime}$ on $\Obj^{\prime}$, are said to be \emph{equidistributed} if their distributions are equal. 
One particularly nice way to prove equidistribution is to give a bijection $f : \Obj \to \Obj^{\prime}$ such that $\stat^{\prime}(f(\sg)) = \stat(\sg)$ for every $\sg \in \Obj$. Our main result will be a bijection of this form. Finally, it will be convenient to use interval notation for the integers, i.e.\ $[a,b]$ is the set of integers at least $a$ and at most $b$. 

In \cite{macmahon}, MacMahon showed that inversion number and major index are equidistributed over $\S_{\alpha}$, and that 
\begin{align*}
\dists{\inv}{\alpha}{q} = \dists{\maj}{\alpha}{q} = \qbinom{|\alpha|}{\alpha_{1}, \alpha_{2}, \ldots, \alpha_{n}} = \frac{\qint{|\alpha|}!}{\qint{\alpha_1}! \qint{\alpha_2}! \ldots \qint{\alpha_n}!}.
\end{align*}
where we use the standard $q$-analogs
\begin{align*}
\qint{n}! = \qint{n} \qint{n-1} \ldots \qint{1} \quad \quad \qint{n} = \frac{1-q^n}{1-q}.
\end{align*}
MacMahon's proof was not bijective; the first bijective proof of this fact was given in \cite{foata}. A second proof, essentially due to Carlitz \cite{carlitz}, is sometimes known as the \emph{insertion method}. It will be the template for our bijections in Section \ref{sec:eq-multiset}.

Part of our main result\footnote{To state our main result in full we will need more notation, which we will define later.} can be viewed as a bijective proof of the identity
\begin{align}
\label{main-result}
\sum_{\sg \in \S_{\alpha}} q^{\maj(\sg)} \prod_{i=1}^{\des(\sg)} \left( 1 + z / q^{i} \right) &= \sum_{\sg \in \S_{\alpha}} q^{\inv(\sg)} \prod_{j \in \Des(\sg)} \left( 1 + z/q^{\inv^{\Box, j} + 1 }\right)
\end{align}
for any composition $\alpha$. The case $\alpha = 1^n$ was proved in \cite{rw}. In order to provide our bijective proof, we will rephrase this statement as an equidistribution result on ordered multiset partitions. Then we will generalize Carlitz's insertion method to ordered multsiet partitions in order to complete our proof.

\subsection{The insertion method for $\S_n$}
\label{ssec:insertion-sn}
One consequence of MacMahon's equidistribution theorem is a pair of recursions for the distributions of the inversion number and the major index over $\S_{n}$:
\begin{equation}
\label{rec-sn}
\begin{aligned}[c]
\dists{\inv}{n}{q} = [n]_{q} \dists{\inv}{n-1}{q}
\end{aligned}
\qquad
\begin{aligned}[c]
\dists{\maj}{n}{q} = [n]_{q} \dists{\maj}{n-1}{q}.
\end{aligned}
\end{equation}
On the other hand, these two statements imply MacMahon's result. Carlitz's insertion method gives bijective proofs of these statements which can be combined to build a recursive bijection $\psi_{n}: \S_{n} \rightarrow \S_{n}$ such that $\maj(\psi(\sg)) = \inv(\sg)$. We say that $\psi_{n}$ maps the inversion number to the major index. We outline Carlitz's insertion method below.

To prove the left statement in \eqref{rec-sn}, one simply considers all the possible ways to insert $n$ into a permutation in $\S_{n-1}$ to create a permutation in $\S_{n}$. It is clear that, for $\sg \in \S_{n-1}$, inserting $n$ after the first $i$ elements of $\sg$ creates $n-i-1$ new inversions and does not affect the previously existing inversions. For example, for $\sg = 5167324 \in \S_{7}$, we can ``label'' these positions with subscripts that give the number of new inversions created by inserting an 8 at that position:
\begin{align*}
_{7}5_{6}1_{5}6_{4}7_{3}3_{2}2_{1}4_{0}.
\end{align*}
This proves the inversion side of \eqref{rec-sn}. The key to the insertion method is that something similar is true for the major index. In particular, we can label the spaces between elements of $\sg \in \S_{n-1}$, along with the left and right ends, according to the following scheme:
\begin{enumerate}
\item Label the position after $\sg_{n-1}$ with a zero.
\item Label the descents of $\sg$ right to left with $1, 2, \ldots, \des(\sg)$.
\item Label the position before $\sg_{1}$ with $\des(\sg)+1$.
\item Label the ascents of $\sg$ from left to right with $\des(\sg)+2, \ldots, n-1$.
\end{enumerate}
For example, $\sg = 5167324$ receives the following labels in this setting:
\begin{align*}
_{4}5_{3}1_{5}6_{6}7_{2}3_{1}2_{7}4_{0}
\end{align*}
These labels give the change in major index that comes from inserting $n$ at that position; one proof of this fact can be found in \cite{hlr}. This completes the proof of \eqref{rec-sn} and also gives a bijection that takes the inversion number to the major index. We include an example of this bijection in Figure \ref{fig:carlitz}. To compte $\psi_5(52143)$, we remove the $5$ and count the number of inversions lost by removing 5. In this case, we have lost 4 inversions. We record this number in the third column and the resulting permutation in the $\sg$ column. We repeat this process until we have reached $n = 1$ and filled the first three columns of the table. To build our new permutation, we recursively place $n$ at the position that receives label $i$ in the major index labeling. These labels have been italicized in the example.

\begin{figure}
\begin{center}
\begin{tabular}{l l l l }
$n$ & $\sg$ & Change in $\inv$  & $\psi_{n}(\sg)$ \\\hline
5 & 52143 &  & 24153 \\
4 & 2143 & 4 & $_{2}2_{3}4_{1}1_{\emph{4}}3_{0}$ \\
3 & 213 & 1 & $_{2}2_{\emph{1}}1_{3}3_{0}$ \\
2 & 21 & 0 & $_{2}2_{1}1_{\emph{0}}$ \\
1 & 1 & 1 & $_{\emph{1}}1_{0}$
\end{tabular}
\end{center}
 \caption{We compute $\psi_{5}(52143)$.}
\label{fig:carlitz}
\end{figure}

\subsection{The insertion method on $\S_{\alpha}$}
\label{ssec:insertion-sa}
It is natural to hope that this proof can be extended to permutations that may contain multiple copies of the same number. That is, we would like to give insertion proofs that
\begin{equation}
\label{rec-a}
\begin{aligned}[c]
\dists{\inv}{\alpha}{q} = \qbinom{|\alpha|}{\alpha_{n}} \dists{\inv}{\alpha^{-}}{q}
\end{aligned}
\qquad
\begin{aligned}[c]
\dists{\maj}{\alpha}{q} = \qbinom{|\alpha|}{\alpha_{n}} \dists{\maj}{\alpha^{-}}{q}.
\end{aligned}
\end{equation}
where $\alpha^{-} = (\alpha_1, \alpha_2, \ldots, \alpha_{n-1})$. Such proofs would imply MacMahon's equidistribution theorem and provide a bijection between inversion number and major index.

The inversion side cooperates nicely. As before, inserting an $n$ to the right of $i$ elements of $\sg \in \S_{\alpha}$ increases the inversion number by $|\alpha^{-}|-i$. Hence this insertion can create between 0 and $|\alpha^{-}|$ inversions. Furthermore, the position of a new $n$ has no affect on the number of inversions added by other $n$'s; in other words, each insertion is independent of the other insertions. This allows us to compute $\dists{\inv}{\alpha}{q}$ from $\dists{\inv}{\alpha^{-}}{q}$:
\begin{align*}
\dists{\inv}{\alpha}{q} &= \dists{\inv}{\alpha^{-}}{q} \left. \left(\prod_{i=0}^{|\alpha^{-}|} \frac{1}{1-q^{k}x}\right) \right|_{x^{\alpha_{n}}} = \qbinom{|\alpha|}{\alpha_{n}} \dists{\inv}{\alpha^{-}}{q} .
\end{align*}

To prove the major index side of \eqref{rec-a}, we essentially recreate the bijection constructed in \cite{foata-han, moon}. Let $\multiset{S}{k}$ denote the family of $k$-element multisets containing elements drawn from the set $S$. We would like to establish a bijection
\begin{align*}
\phi^{\maj}_{\alpha} : \S_{\alpha^{-}} \times \multiset{[0, |\alpha^{-}|]}{\alpha_{n}} \to \S_{\alpha}
\end{align*}
such that
\begin{align*}
\maj\left(\phi^{\maj}_{\alpha}(\sg, B)\right) = \maj(\sg) + \sum_{b \in B} b .
\end{align*}
Such a map would provide a combinatorial proof of the major index side of \eqref{rec-a}. Before inserting any $n$'s, we label $\sg \in \S_{\alpha^{-}}$ in a manner reminiscent of Section \ref{ssec:insertion-sn}:
\begin{enumerate}
\item Label the position after $\sg_{|\alpha^{-}|}$ with a zero.
\item Label the descents of $\sg$ right to left with $1, 2, \ldots, \des(\sg)$.
\item Label the position before $\sg_{1}$ with $\des(\sg)+1$.
\item Label the non-descents of $\sg$ from left to right with $\des(\sg)+2, \ldots, |\alpha^{-}|$.
\end{enumerate}
Write $B = \{b_{1} \geq b_{2} \geq \ldots \geq b_{\alpha_{n}}\}$. We insert an $n$ into the position labeled $b_{1}$. Then we go through the labeling process again, stopping once we have used the label $b_{1}$. We insert an $n$ into the position labeled $b_{2}$. We repeat this process until we have processed each element of $B$. We omit the proof that this map satisfies the desired properties, which can be found in \cite{foata-han, moon}. Instead, we will work through an example.

Let $\alpha = \{2, 1, 3, 2\}$, $\sg = 323113 \in \S_{\alpha^{-}}$, and $B = \{5^{2}\}$. We note that $\maj(\sg) = 4$. We begin by labeling $\sg$ according to the labeling associated with the major index.
\begin{align*}
_{3}3_{2}2_{4}3_{1}1_{5}1_{6}3_{0} .
\end{align*}
We place a 4 at the label 5 to get $3231413$. We relabel this permutation, stopping when we use the label 5. 
\begin{align*}
_{4}3_{3}2_{5}3_{2}1_{\ }4_{1}1_{\ }3_{0}
\end{align*}
Then we insert a 4 at the position labeled 5 to get $32431413$. As desired,
\begin{align*}
\maj(32431413) = 14 = \maj(\sg) + \sum_{b \in B} b = 4 + 5 + 5.
\end{align*}

Just as before, these insertion maps can be combined to yield a bijection $\psi_{\alpha} : \S_{\alpha} \to \S_{\alpha}$ that takes inversion number to major index. We illustrate $\psi_{\alpha}$ with the example in Figure \ref{fig:bij-a}. As in Section \ref{ssec:insertion-sn}, we fill the first three columns of the table from top to bottom by removing all copies of the largest element and recording the multiset of inversions lost during each removal, which we call $B$. Then we fill the fourth column by using the labeling associated with the major index to repeatedly insert a new element at the position that received the largest remaining label in $B$.

\begin{figure}
\begin{center}
\begin{align*}
\begin{array}{l l l l }
\alpha                          & \sg       & B  & \psi_{A}(\sg) \\\hline
\{1, 2, 1, 3\}      & 2443214   &                           & 4432124 \\
                            &           &                           & 4\ 4_{3}3_{2}2_{1}1\ 2_{\emph{0}} \\
                            &           &                           & 4_{\emph{3}}3_{2}2_{1}1\ 2_{0} \\
\{1, 2, 1\}             & 2321      & \{3, 3, 0\}               & _{\emph{3}}3_{2}2_{1}1_{4}2_{0} \\
\{1, 2\}                & 212       & \{2\}                     & _{\emph{2}}2_{1}1_{3}2_{0} \\
                            &           &                           & 2_{1}1_{\emph{0}} \\
\{1\}                & 1         & \{1, 0\}                  & _{\emph{1}}1_{0}  \\
\end{array}
\end{align*}
\end{center}
 \caption{An example of the map $\psi_{\alpha}$ for $\alpha = \{1,2,1,3\}$.}
\label{fig:bij-a}
\end{figure}

\section{Ordered Set and Multiset Partitions}
\label{sec:osp}

\subsection{Definitions}
The \emph{ordered set partitions} of order $n$ with $k$ blocks are partitions of the set $\{1,2,\ldots,n\}$ into $k$ subsets (called \emph{blocks}) with some order on the blocks. We write this set as $\osp{n}{k}$. For example, $13|45|2 \in \osp{5}{3}$, where we have listed each block as an increasing sequence and we have used bars to separate blocks. It is not difficult to see that $\osp{n}{n} = \S_n$, so ordered set partitions are a natural extension of permutations. 

More generally, given a composition $\alpha$ of length $n$, the ordered multiset partitions $\osp{\alpha}{k}$ are the partitions of the multiset $A(\alpha) = \{i^{\alpha_i}: 1 \leq i \leq n\}$ into $k$ ordered sets, which we still call blocks. For example, $24|134|2 \in \osp{(1,2,1,2)}{3}$. Note that, although we are dealing with the elements of a multiset, each block is still a set. The analogous objects where blocks are also multisets will not arise in our work.

So far, we have written each block of an ordered set or multiset partition in increasing order from left to right. We will often wish to use the opposite notation, i.e.\ we will write each block in decreasing order from left to right. Furthermore, we will use stars as subscripts to ``connect'' elements in the same block instead of bars to separate blocks. For example, the ordered multiset partition $24|134|2$ is written as $4_{\ast}2\ 4_{\ast} 3_{\ast} 1\ 2$ in this new notation. We will refer to an ordered multiset partition written this way as a \emph{descent-starred permutation}, since every permutation of the given multiset with some (but maybe not all) of its descents ``starred'' corresponds to an ordered multiset permutation in this fashion. More formally, we define the \emph{descent-starred permutations} of $A(\alpha)$ with $k$ stars as follows:
\begin{align*}
\S_{\alpha, k}^{>} = \{(\sg, S) : \sg \in \S_A, \,  S \subseteq \Des(\sg), \, |S| = k \} .
\end{align*}
The set $S$ corresponds to the entries of $\sg$ which are followed by stars. Then there is a straightforward bijection $\osp{\alpha}{k} \leftrightarrow \S_{\alpha,|\alpha|-k}^{>}$; given an ordered multiset partition, we write its blocks in decreasing order from left to right, add stars between adjacent elements that share a block, and remove the bars.

\subsection{Statistics}
We will study the following four statistics on ordered multiset partitions. All four of them appear in connection with a certain operator in the theory of diagonal harmonics which we call the Garsia-Haiman delta operator. Two of them ($\inv$ and $\maj$) are directly involved with the statement \eqref{main-result} given in the introduction.

First, given $\pi \in \osp{\alpha}{k}$, $\inv(\pi)$ counts the number of pairs $a > b$ such that $a$'s block is strictly to the left of $b$'s block in $\pi$ and $b$ is minimal in its block in $\pi$. We call these pairs \emph{inversions}. For example, $15|23|4$ has two inversions, between the 5 and the 2 and the 5 and the 4. 

For any $\pi \in \osp{\alpha}{k}$, we number $\pi$'s blocks $\pi_1, \pi_2, \ldots, \pi_k$ from left to right. Let $\pi^h_i$ by the $h$th smallest element in $\pi_i$, beginning at $h=1$. Then the \emph{diagonal inversions} of $\pi$, written $\Dinv(\pi)$, are the triples
\begin{align*}
\{(h, i, j) : 1 \leq i < j \leq k, \ \pi^h_i > \pi^h_j \} \cup \{(h, i, j): 1 \leq i < j \leq k, \ \pi^h_i < \pi^{h+1}_j \} .
\end{align*}
The triples of the first type are \emph{primary diagonal inversions}, and the triples of the second type are \emph{secondary diagonal inversions}. We set $\dinv(\pi)$ to be the cardinality of $\Dinv(\pi)$. For example, consider the ordered multiset permutation $24|134|2$. It is helpful to ``stack'' the elements in each block vertically, obtaining the diagram
\begin{align*}
\begin{ytableau}
\none & 4 & \none \\
4 & 3 & \none \\
2 & 1 & 2
\end{ytableau}
\end{align*}
The primary diagonal inversions are $(1,1,2)$ (between the leftmost 2 and the 1 in the first row) and $(2,1,2)$ (between the 4 and the 3 in the second row) and the only secondary diagonal inversion is $(1,1,2)$ (between the leftmost 2 in the first row and the 3 in the second row), for a total of three diagonal inversions.

To define the \emph{major index} of $\pi$, we consider the permutation $\sg = \sg(\pi)$ obtained by writing each block of $\pi$ in decreasing order. Then we recursively form a word $w$ by setting $w_0=0$ and $w_i = w_{i-1} + \chi(\sg_i \text{ is minimal in its block in } \pi)$ for each $i > 0$. Then we set
\begin{align*}
\maj(\pi) &= \sum_{i : \ \sg_i > \sg_{i+1} } w_i .
\end{align*}
Using the ordered multiset permutation $\pi = 24|134|2$ again, we obtain $\sg = 424312$ and $w = 0011123$, beginning with $w_0=0$. The descents of $\sg$ occur at positions 1,3, and 4, so $\maj(\pi) = w_1 + w_3 + w_4 = 0 + 1 + 1 = 2$. 

There is an alternate definition of the major index which we will use in some of our proofs. It is clear from the definition above that for any $\pi \in \osp{n}{n}$, the major index defined here is equivalent to the major index defined on permutations in Section \ref{sec:intro}. Now we consider what happens to $\maj(\pi)$ if we decide to insert a star after a descent at position $d$. We note that $w_i$ decreases by 1 for each $i \geq d$. Therefore the major index of $\pi$ has decreased by 1 for each descent weakly to the right of position $d$. Therefore, if $(\sg, S)$ is the descent-starred permutation representation of $\pi$, we can write
\begin{align}
\label{maj-alt}
\maj(\pi) = \maj(\sg) - \sum_{i \in S} |\Des(\sg) \cap \{i,i+1,\ldots\}| .
\end{align}
This shows that
\begin{align*}
\distosp{\maj}{\alpha}{k}{q} &= \left. \prod_{\sg \in \S_{\alpha}} q^{\maj(\sg)} \prod_{i=1}^{\des(\sg)} \left(1 + z / q^{i} \right) \right|_{z^{|\alpha| - k}} .
\end{align*}

Finally, we define the \emph{minimum major index} of $\pi$ as follows. We begin by writing the elements of $\pi_k$ in increasing order from left to right. Then, recursively for $i = k-1$ to $1$, we choose $r$ to be the largest element in $\pi_i$ that is less than or equal to the leftmost element in $\pi_{i+1}$, as previously recorded. If there is no such $r$, we write $\pi_i$ in increasing order. If there is such an $r$, beginning with $\pi_i$ in increasing order, we cycle its elements until $r$ is the rightmost element in $\pi_i$. We write down $\pi_i$ in this order. We continue this process until we have processed each block of $\pi$. For example, consider the ordered multiset permutation $\pi = 13|23|14|234$. Processing the blocks of $\pi$ from right to left, we obtain $312341234$.  We consider the result as a permutation, which we denote $\tau = \tau(\pi)$, and define
\begin{align*}
\minimaj(\pi) = \sum_{i: \ \tau_i  > \tau_{i+1}} i
\end{align*}
i.e.\ the major index of the permutation $\tau$. The name $\minimaj$ comes from the fact that $\minimaj(\pi)$ is equal to the minimum major index achieved by any permutation that can be obtained by permuting elements within the blocks of $\pi$. 

\section{Equidistribution on Ordered Multiset Partitions}
\label{sec:eq-multiset}

In this section, we prove the following equidistribution theorem for ordered multiset partitions. 

\begin{thm}
\label{thm:eq-multiset}
For any composition $\alpha$,
\begin{align*}
\distosp{\inv}{\alpha}{k}{q} &= \distosp{\maj}{\alpha}{k}{q} = \distosp{\dinv}{\alpha}{k}{q}.
\end{align*}
\end{thm}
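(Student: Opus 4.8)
The plan is to prove all three equidistributions at once by induction on the number of distinct letters, using a single generalization of Carlitz's insertion method: given $\pi \in \osp{\alpha}{k}$, delete every copy of the largest letter $n$ to obtain $\pi' \in \osp{\alpha^{-}}{k'}$, and control how each of $\inv$, $\maj$, and $\dinv$ changes. The base case is $\alpha = (\alpha_1)$, where $\osp{\alpha}{k}$ is empty unless $k = \alpha_1$ and otherwise consists of the single partition into singleton blocks, on which all three statistics vanish. For the inductive step it suffices to show that $\distosp{\inv}{\alpha}{k}{q}$, $\distosp{\maj}{\alpha}{k}{q}$, and $\distosp{\dinv}{\alpha}{k}{q}$ satisfy one and the same recursion in terms of the polynomials $\distosp{\stat}{\alpha^{-}}{k'}{q}$.

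First I would treat $\inv$, which is the most transparent and fixes the target recursion. The $k - k'$ blocks that vanish upon deleting the $n$'s are exactly the singleton blocks $\{n\}$, while the remaining $\alpha_n - (k-k')$ copies of $n$ sit on top of blocks of $\pi'$. Because $n$ exceeds every other letter and is never the minimum of a block it shares with a smaller letter, a copy of $n$ occupying the $i$-th of the $k$ blocks contributes exactly $k-i$ inversions (one through the minimum of each block to its right), and all inversions among the smaller letters are preserved. Hence $\inv(\pi) = \inv(\pi') + \sum_{i \in T}(k-i)$, where $T \subseteq \{1, \ldots, k\}$ with $|T| = \alpha_n$ records the positions of the $n$'s; summing $q^{\sum_{i \in T}(k-i)}$ over the allowed configurations (interleaving $k-k'$ new singletons among the blocks of $\pi'$ and choosing which old blocks are enlarged) gives an explicit $q$-weight, and this is the recursion the other two statistics must reproduce.

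The core of the argument is to equip the available insertion slots --- both the interiors of the blocks of $\pi'$ and the $k'+1$ gaps where a fresh singleton $\{n\}$ may be created --- with Carlitz-style labels drawn from $\{0, 1, 2, \ldots\}$ so that inserting a copy of $n$ into the slot labeled $\ell$ raises the statistic by exactly $\ell$ and effects the correct change in block count. For $\maj$ I would adapt the labeling of Section \ref{ssec:insertion-sa} together with the alternate formula \eqref{maj-alt}, arranging that the choice between enlarging an existing block and opening a new one corresponds precisely to the starred/unstarred dichotomy of the descent-starred permutation; summing over the resulting multiset of labels reproduces the $\inv$-recursion, and at the level of the variable $z$ tracking $|\alpha| - k$ this equality is the ordered-multiset-partition form of the identity \eqref{main-result}. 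For $\dinv$ I would label the cells of the ``stacked'' diagram directly, splitting the effect of each inserted top cell into its primary contribution (competition within its own row) and the secondary diagonal inversions it forms with the row below.

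The main obstacle is the $\dinv$ labeling. Whereas the $\inv$ contribution of a copy of $n$ depends only on the position of its block, a single $\dinv$ increment is distributed between a primary and a secondary part that both depend on the full vertical profile of the stacked diagram, and opening a new singleton block shifts that profile globally. The delicate point is to verify that, despite this nonlocality, the multiset of $\dinv$-labels over all insertion slots matches the $\inv$-weights from the second paragraph with the correct multiplicities; establishing this is precisely where the new generalization of Carlitz's insertion method is needed, after which composing the three insertion bijections yields explicit statistic-preserving maps on $\osp{\alpha}{k}$ and completes the induction.
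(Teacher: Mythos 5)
Your overall plan --- induction on the length of $\alpha$ via deletion/insertion of all copies of the largest letter $n$, with Carlitz-style labels forcing $\inv$, $\maj$, and $\dinv$ to satisfy one common recursion --- is exactly the paper's strategy. But the step you use to \emph{fix} the target recursion contains a genuine error. You claim that a copy of $n$ occupying the $i$-th of the $k$ blocks contributes exactly $k-i$ inversions, one with the minimum of each block to its right. This fails whenever a singleton block $\{n\}$ lies to the right of that copy: the minimum of such a block is $n$ itself, and an inversion requires the strict inequality $a>b$, so that block contributes nothing. Concretely, in $\osp{(1,2)}{3}$ the partition $2|2|1$ has $\inv = 2$, while your formula $\inv(\pi') + \sum_{i\in T}(k-i)$ gives $0+(3-1)+(3-2)=3$. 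Worse, the overcount (the number of pairs consisting of a copy of $n$ and a singleton $\{n\}$ block strictly to its right) varies over $\osp{\alpha}{k}$: in $\osp{(2,2)}{3}$ your formula is exact for $2|12|1$ (both give $3$) but off by one for $12|1|2$ (it gives $2$, the truth is $1$), so the error is not even a uniform shift. Hence the recursion you would propagate through the induction is not satisfied by any of the three statistics. The repair is precisely the paper's labeling discipline: an inserted $n$ creates one inversion for each block \emph{of $\pi'$} strictly to its right, so only the $\ell$ old blocks get labels $0,\ldots,\ell-1$ (right to left), the gaps get labels $0,\ldots,\ell$ for new singletons, the insertion data is a set $U\subseteq[0,\ell-1]$ together with a multiset $B$ with entries in $[0,\ell]$, and the increments are bounded by $\ell$ rather than $k-1$, yielding
\begin{equation*}
\distosp{\inv}{\alpha}{k}{q} \;=\; \sum_{\ell=1}^{k} q^{\binom{\alpha_n-k+\ell}{2}} \qbinom{\ell}{\alpha_n-k+\ell}\qbinom{k}{\ell}\, \distosp{\inv}{\alpha^{-}}{\ell}{q}.
\end{equation*}

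Separately, your $\dinv$ leg is not yet a proof: you correctly isolate the primary/secondary nonlocality as the main obstacle, but you leave the required labeling unconstructed, and that labeling is the real content of this part of the argument. The paper's resolution is to label the blocks of $\pi'$ by size, largest first, ties broken left to right, with labels $0,1,\ldots,\ell-1$ (plus right-to-left gap labels $0,\ldots,\ell$ for new singleton blocks). Inserting $n$ on top of a block of former size $s$ carrying label $u$ then creates exactly $u$ new diagonal inversions: a primary one with each block of size greater than $s$ to its right, a secondary one with each block of size greater than $s$ to its left, and a secondary one with each block of size exactly $s$ to its left; and since two $n$'s never form a diagonal inversion, distinct insertions are independent. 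Until you supply this (or an equivalent) labeling and verify the count, the $\dinv$ third of your induction --- and with it the full theorem --- remains open in your write-up.
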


The reader may have noticed that $\distosp{\minimaj}{n}{k}{q}$ is not included in the list of equidistributed polynomials; that is because we have not proved this case at this point. We describe the unique difficulties of this case in Subsection \ref{ssec:minimaj}. 

Our proof of Theorem \ref{thm:eq-multiset} is bijective and employs a generalization of Carlitz's insertion method from permutations to ordered multiset partitions. We describe ``insertion maps'' for $\inv$, $\dinv$, and $\maj$ in Subsections \ref{ssec:omp-inv},  \ref{ssec:omp-maj}, and \ref{ssec:omp-dinv}, respectively. These maps will be of the form 
\begin{align*}
\phi^{\inv}_{\alpha,k,\ell} &: \osp{\alpha^{-}}{\ell} \times \binom{[0, \ell-1]}{\alpha_n - k + \ell} \times \multiset{[0,\ell]}{ k - \ell} \to \osp{\alpha}{k} \\
\phi^{\maj}_{\alpha,k,\ell} &: \osp{\alpha^{-}}{\ell} \times \binom{[0, \ell-1]}{\alpha_n - k + \ell} \times \multiset{[0,\ell]}{ k - \ell} \to \osp{\alpha}{k} \\
\phi^{\dinv}_{\alpha,k,\ell} &: \osp{\alpha^{-}}{\ell} \times \binom{[0, \ell-1]}{\alpha_n - k + \ell} \times \multiset{[0,\ell]}{ k - \ell} \to \osp{\alpha}{k} 
\end{align*}
where we have used some new notation here. For any composition $\alpha$ of length $n$, $\alpha^{-}$ is the composition obtained by removing the rightmost entry in $\alpha$. For a set $S$, $\binom{S}{k}$ is defined to be the collection of all subsets of $S$ of size $k$. Similarly, $\multiset{S}{k}$ is the collection of all multisets of size $k$ whose elements are taken (possibly more than once) from $S$. By ``insertion maps,'' we mean that they satisfy the properties
\begin{align*}
\inv \left(\phi^{\inv}_{\alpha, k, \ell} (\pi, U, B) \right) &= \inv (\pi) + \sum_{u \in U} u + \sum_{b \in B} b \\
\maj \left(\phi^{\maj}_{\alpha, k, \ell} (\pi, U, B) \right) &= \maj (\pi) + \sum_{u \in U} u + \sum_{b \in B} b \\
\dinv \left(\phi^{\dinv}_{\alpha, k, \ell} (\pi, U, B) \right) &= \dinv (\pi) + \sum_{u \in U} u + \sum_{b \in B} b .
\end{align*}
When $k = |\alpha|$ these maps will reduce to the insertion processes defined in Subsection \ref{ssec:insertion-sa}. We will use these maps to construct a bijective proof of Theorem \ref{thm:eq-multiset}. Subsection \ref{ssec:omp-dist} contains more information about the shared distribution of the polynomials in Theorem \ref{thm:eq-multiset}.

Finally, it is natural to wonder if our results can be transferred to ordered partitions of a multiset into \emph{multisets} instead of sets. For example, $113|23|1$ is one such object. At this point, we have not managed to accomplish this task; in particular, the major index statistics seems to behave differently in this setting. However,

\subsection{Insertion for $\inv$}
\label{ssec:omp-inv}
Recall that we need to define a map of the form
\begin{align*}
\phi^{\inv}_{\alpha,k,\ell} &: \osp{\alpha^{-}}{\ell} \times \binom{[0, \ell-1]}{\alpha_n - k + \ell} \times \multiset{[0,\ell]}{ k - \ell} \to \osp{\alpha}{k} .
\end{align*}
We can think of the set $U \in \binom{[0, \ell-1]}{\alpha_n - k + \ell}  $ as providing the increases in $\inv$ that come from adding a new $n$ without creating a new block, and the multiset $B \in \multiset{[0,\ell]}{ k - \ell}$  as providing the increases in the statistic that come from adding a new $n$ while creating a new block. These maps will cooperate with our inversion statistic in the following manner:
\begin{align}
\label{inv-prop}
\inv \left(\phi^{\inv}_{\alpha, k, \ell} (\pi, U, B) \right) &= \inv (\pi) + \sum_{u \in U} u + \sum_{b \in B} b
\end{align}

Given $\pi \in \osp{\alpha^{-}}{\ell}$, we label each block of $\pi$ from right to left with the numbers $0, 1, \ldots, \ell-1$. We repeatedly remove the largest element from the multiset $U \cup B$, taking the element from $U$ if the largest elements are equal. Call this element $i$. If $i$ came from $U$, we place an $n$ in the block that received the label $i$. If $i$ came from $B$ and is equal to $\ell$, we place an $n$ as a new block to the left of the block that received the label $i$. If $i$ came from $B$ and is less than $\ell$, we place an $n$ as a new block just to the right of the block labeled $i$. The resulting ordered multiset partition is $\phi^{\inv}(\pi, U, B)$. 

For example, say $\alpha = \{1, 2, 2, 3\}$, $k=6$, and $\ell = 5$. We consider $\pi = 3 | 1 | 2 | 2 | 1 3 \in \osp{\alpha^{-}}{5}$, $U = \{2, 0\}$, and $B = \{3\}$. Then $U \cup B = \{3, 2, 0\}$. We label $\pi$ as follows, with the labels written as subscripts at the end of each block:
\begin{align*}
3_4 | 1_3 | 2_2 | 2_1 | 1 3_0 \\
\end{align*}
The largest element in $U \cup B$ is 3 and it comes from $B$, so we insert a 4 at the position labeled 3 as a new block to the right of the block labeled with the 3.
\begin{align*}
3_4 | 1_3 | 4 | 2_2 | 2_1 |  1 3_0
\end{align*}
Now the largest remaining element of $U \cup B$ is 2 and it comes from $U$, so we put a 4 into the block labeled 2. 
\begin{align*}
3_4 | 1_3 | 4 | 2 4_2 | 2_1 | 1 3_0
\end{align*}
Finally, we insert a 4 into the block labeled 0 to obtain $\phi^{\inv}(\pi, U, B)$.
\begin{align*}
3_4 | 1_3 | 4 | 2 4_2 | 2_1 | 1 3 4_0
\end{align*}
We can check that \eqref{inv-prop} holds here.
\begin{align*}
10 &= \inv \left( 3 | 1 | 4 | 4 2 | 2 | 4 3 1 \right) \\
 	&=\inv \left( 3 | 1 | 2 | 2 | 3 1 \right) + \sum_{u \in U} u + \sum_{b \in B} b \\
	&= 5 + (2 + 0 )+ 3 .
\end{align*}

\begin{lemma}[Insertion for $\inv$]
\label{lemma:inv-insertion}
For any composition $\alpha$ of length $n$ and positive integers $\ell \leq k$, $\phi^{\inv}_{\alpha,k,\ell}$ is well-defined and injective. The image of $\phi^{\inv}_{\alpha,k,\ell}$ is the ordered multiset partitions $\pi \in \osp{\alpha}{k}$ with exactly $k - \ell$ singleton blocks containing $n$. Furthermore, for any $\pi \in \osp{\alpha}{\ell}$, $U \in \binom{[0, \ell-1]}{\alpha_n - k + \ell}$, and $B \in \multiset{[0,\ell]}{k - \ell}$, 
\begin{align*}
\inv \left(\phi^{\inv}_{\alpha, k, \ell} (\pi, U, B) \right) &= \inv (\pi) + \sum_{u \in U} u + \sum_{b \in B} b .
\end{align*}

\end{lemma}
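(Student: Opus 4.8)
The plan is to establish the four assertions in the order: well-definedness, the inversion formula, and then the image and injectivity claims together via an explicit inverse. First I would verify well-definedness by counting. The map adds $|U| + |B| = (\alpha_n - k + \ell) + (k - \ell) = \alpha_n$ copies of $n$ to $\pi$, which contains none, so the output has the correct multiset $A(\alpha)$ of entries; each element of $B$ spawns a new singleton block and each element of $U$ enlarges an existing block, so the block count becomes $\ell + (k-\ell) = k$. The one genuine point is that every block remains a set, and this is precisely why $U$ is taken to be a set rather than a multiset: distinct labels point to distinct original blocks, each of which receives at most one $n$ (recall $\pi$ has no $n$'s), while the blocks created from $B$ are singletons $\{n\}$ and are automatically sets.

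Next, the inversion formula. The key observation is that all inserted copies of $n$ are strictly larger than every entry of $\pi$ (whose entries lie in $[1,n-1]$) and are mutually equal. Consequently no inserted $n$ is ever the smaller entry of an inversion, two inserted $n$'s never invert one another, and inserting maximal elements neither changes the left-to-right order of the original blocks nor alters which original entry is minimal in each original block. Hence the inversions among original entries are preserved, and every new inversion has the form (inserted $n$, minimal entry $b$ of an original block lying strictly to the right of that $n$'s block). Since all original minima are $<n$, the number of new inversions contributed by a single inserted $n$ equals the number of original blocks strictly to its right. I would then check case by case that this count is exactly the label driving the insertion: an $n$ placed into the block labeled $u$ has precisely the $u$ original blocks labeled $0,\ldots,u-1$ to its right; a new block placed just right of the block labeled $b<\ell$ has the $b$ original blocks labeled $0,\ldots,b-1$ to its right; and a new block placed at the far left ($b=\ell$) has all $\ell$ original blocks to its right. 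Summing over $U$ and $B$ gives $\inv(\pi)+\sum_{u\in U}u+\sum_{b\in B}b$. Note that this argument is insensitive to the order of insertion, since new inversions are counted only against original blocks.

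Finally, for the image and injectivity I would exhibit the inverse. Given $\pi'\in\osp{\alpha}{k}$ with exactly $k-\ell$ singleton blocks equal to $\{n\}$, for each such singleton let $b$ be the number of blocks lying strictly to its right that are not singleton $\{n\}$ blocks; collecting these gives a multiset $B\in\multiset{[0,\ell]}{k-\ell}$. Deleting the $k-\ell$ singletons leaves $\ell$ blocks; labeling them $0,\ldots,\ell-1$ from the right, delete the remaining $\alpha_n-k+\ell$ copies of $n$, each of which lies in a distinct one of these blocks, and record the labels of those blocks as a set $U\in\binom{[0,\ell-1]}{\alpha_n-k+\ell}$; what remains is some $\pi\in\osp{\alpha^-}{\ell}$. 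Verifying that this procedure inverts the forward map on both sides yields injectivity and identifies the image, and the forward map visibly lands in the claimed set since its only singleton $\{n\}$ blocks are the $k-\ell$ created from $B$.

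The main obstacle is the bookkeeping in this bijection, specifically making precise the correspondence between the value $b$ of a block created from $B$ and its position among the original blocks when several inserted $n$'s and several equal-valued new blocks interact in the same region. The cleanest resolution is the invariant established above, namely that the value $b$ corresponds exactly to having $b$ original blocks strictly to the right, together with the fact that $B$ is a multiset, so that equal-valued singletons piled into the same gap are indistinguishable and are faithfully recorded by their multiplicity.
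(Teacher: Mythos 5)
Your proposal is correct and follows essentially the same approach as the paper: well-definedness from $U$ being a set, and the inversion count via the observation that each inserted $n$ creates exactly one inversion with the minimum of each original block strictly to its right, with no inversions destroyed or created ending at $n$. The only difference is that you spell out the explicit inverse map to establish injectivity and the image, which the paper merely asserts (``the insertion map is clearly bijective onto its image''), so your write-up is, if anything, more complete on that point.
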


\begin{proof}
The only way $\phi^{\inv}_{\alpha,k,\ell}$ could not be well-defined is if we tried to insert two $n$'s into the same block. Since $U$ is a set, this does not occur. The statement about the image of $\phi^{\inv}_{\alpha,k,\ell}$ follows from the definition. Furthermore, the insertion map is clearly bijective, and any function that bijects onto its image is an injection.

We will prove that whenever we remove the largest element $i$ from $U \cup B$ and introduce a new $n$ to our ordered multiset partition as described in the insertion map, we introduce $i$ new inversions to the ordered multiset partition. Specifically, we create inversions between this new $n$ and the minimal elements of the $i$ labeled blocks of $\pi$ that are to the right of the block that received label $i$. Since $n$ is the largest entry, we do not create any new inversions that end at $n$. Finally, we note that we do not destroy any inversions that existed before we inserted this new $n$. 
\end{proof}

\subsection{Insertion for $\maj$}
\label{ssec:omp-maj}
To define $\phi^{\op{\maj}}_{\alpha, k, \ell}$, we will view $\pi \in \osp{\alpha^{-}}{\ell}$ as a descent-starred multiset permutation $(\sg, S)$. We will label the unstarred positions of $\sg$ as in Section \ref{sec:intro}. Specifically, we label the unstarred descents from right to left, then the non-descents from left to right, using increasing labels $0, 1, \ldots, \ell$. Let $U^{+} = \{u+1 : u \in U\}$. We repeatedly remove the largest element $i$ from $U^{+} \cup B$, taking $i$ from $B$ if the largest elements are equal. Then we proceed through the following algorithm:
\begin{enumerate}
\item Insert an $n$ at the position labeled $i$.
\item Move each star to the right of the new $n$ one descent to the left.
\item If $i$ came from $U^{+}$, star the rightmost descent.
\item Relabel as before, stopping at the label $i$ if $i$ came from $B$ and $i-1$ if $i$ came from $U^{+}$.
\end{enumerate}
When we have used each element of $U^{+} \cup B$, the result is $\phi^{\maj}_{\alpha, k, \ell}((\sg, S), U, B)$. 

For example, let us again consider $\alpha = \{1, 2, 2, 3\}$, $k=6$, and $\ell = 5$ with $(\sg, S) = 3\ 1\ 2\ 2\ 3_{*}1 \in \osp{\alpha^{-}}{5}$, $U = \{2, 0\}$, and $B = \{3\}$. Then $U^{+} \cup B = \{3,3,1\}$. We label $(\sg, S)$ as follows.
\begin{align*}
_{2}3_{1}1_{3}2_{4}2_{5}3_{*}1_{0} .
\end{align*}
We insert a 4 at the position labeled 3 and then move all stars to the right of that position one spot to their left. Since we took 3 from $B$, we do not star the rightmost descent, resulting in $3\ 1\ 4_{*}2\ 2\ 3\ 1$. We re-label to obtain the following.
\begin{align*}
_{3}3_{2}1\ 4_{*}2\ 2\ 3_{1}1_{0}
\end{align*}
Again we choose the position labeled 3. This time we star the rightmost descent after shifting stars because 3 came from $U^{+}$, yielding $4\ 3_{*}1\ 4\ 2\ 2\ 3_{*}1$. Finally we label this element
\begin{align*}
4_{2}3_{*}1\ 4_{1}2\ 2\ 3_{*}1_{0}.
\end{align*}
We insert a 4 at the position labeled 1, shift stars, and star the rightmost descent to get $4\ 3_{*}1\ 4\ 4_{*}2\ 2\ 3_{*}1$. We check that this new permutation has the desired major index.
\begin{align*}
10 &= \maj \left( 4\ 3_{*}1\ 4\ 4_{*}2\ 2\ 3_{*}1 \right) = 1 + 1 + 3 + 5  \\
	&= \maj \left(3\ 1\ 2\ 2\ 3_{*}1\right) + \sum_{u \in U} u + \sum_{b \in B} b \\
	&= (1 + 4) + (2 + 0) + 3 .
\end{align*} 
Now we prove that this process satisfies the necessary properties.

\begin{lemma}[Insertion Lemma for $\maj$]
\label{lemma:maj-insertion}
For any composition $\alpha$ of length $n$ and positive integers $\ell \leq k$, $\phi^{\maj}_{\alpha,k,\ell}$ is well-defined and injective. Furthermore, for any $\pi \in \osp{\alpha^{-}}{\ell}$, $U \in \binom{[0, \ell-1]}{\alpha_n -k + \ell}$, and $B \in \multiset{[0,\ell]}{k - \ell}$, 
\begin{align*}
\op{\maj} \left(\phi^{\op{\maj}}_{\alpha, k, \ell} (\pi, U, B) \right) &= \op{\maj} (\pi) + \sum_{u \in U} u + \sum_{b \in B} b .
\end{align*}

\end{lemma}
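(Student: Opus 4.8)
The plan is to reduce the whole lemma to a single-insertion claim and then induct on the size of the multiset $U^{+} \cup B$. Explicitly, I will show that each time we remove the largest remaining element $i$ and insert one new $n$ via steps (1)--(4), the major index of the underlying ordered multiset partition goes up by exactly $i$ when $i$ comes from $B$ and by exactly $i-1$ when $i$ comes from $U^{+}$. Granting this, the total increase is $\sum_{b \in B} b + \sum_{u \in U}\bigl((u+1)-1\bigr) = \sum_{b \in B} b + \sum_{u \in U} u$, since $u \in U$ is processed at the label $u+1 \in U^{+}$; the empty case is immediate, so the major index identity follows.

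To prove the single-step claim I work in the descent-starred representation $(\sg, S)$ and use \eqref{maj-alt} in the form $\maj(\pi) = \maj(\sg) - C(\sg, S)$, where $\maj(\sg)$ is the ordinary major index of the permutation and $C(\sg, S) = \sum_{j \in S} |\Des(\sg) \cap [j, \infty)|$. Inserting the new maximum $n$ at the unstarred gap $g$ bearing label $i$ has two effects that I treat separately. First, $n$ turns gap $g$ into an ascent and forces a descent just to its right, shifting every descent weakly right of $g$; a short case split (the far-right gap, an unstarred descent gap, the far-left gap, and an interior non-descent gap) yields $\Delta\maj(\sg)$ as a closed form in $g$ and $r(g) := |\Des(\sg) \cap (g, \infty)|$, and this first part is exactly the classical Carlitz/Foata--Han increment. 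Second, I compute $\Delta C$: a star lying left of $n$ stays put while its descent-count changes by $1 - \chi(g \in \Des(\sg))$, whereas a star lying right of $n$ is slid one descent leftward in step (2) and a direct check shows this raises its descent-count by precisely $1$; for a $U^{+}$-insertion the star added in step (3) contributes one more. Combining $\Delta\maj(\pi) = \Delta\maj(\sg) - \Delta C$ and rewriting the restricted label $i$ in terms of $g$ and $r(g)$ through the labeling rule, all cases collapse to $\Delta\maj(\pi) = i - \chi(i \in U^{+})$, as required.

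Well-definedness I dispose of first: at every stage the labels are exactly $0, 1, \ldots, m$ with $m$ the current number of blocks, and since $U^{+} \cup B \subseteq [0, \ell] \subseteq [0, m]$ the required label is always present; moreover the slide in step (2) carries distinct descents to distinct descents, the freshly created descent at $g+1$ always furnishing the leftmost target, so no two stars ever collide and every star remains seated on a descent, keeping $(\sg, S)$ a valid descent-starred permutation throughout. For injectivity I reverse the algorithm, peeling off the inserted copies of $n$ in increasing order of their labels: each reverse step locates the appropriate copy of $n$, decides whether the insertion was of $U^{+}$- or $B$-type according to whether step (3) had starred the current rightmost descent, slides the relevant stars one descent back to the right, and deletes that $n$, thereby recovering $(\pi, U, B)$ uniquely.

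The main obstacle is the accounting in $\Delta C$, and in particular the verification that moving every star to the right of the new $n$ one descent to the left raises each of their descent-counts by exactly $1$. This is what makes the deliberately restricted labeling -- only unstarred gaps, with labels capped at $m$ -- line up with the increment $i$, and it is also precisely the step that must be inverted cleanly to secure injectivity.
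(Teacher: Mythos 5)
Your proposal is correct and takes essentially the same route as the paper's own proof: the same reduction to a single-insertion claim (increment $i$ for $B$-elements, $i-1$ for $U^{+}$-elements), the same use of \eqref{maj-alt} to write $\Delta\maj(\pi)$ as the classical Carlitz/Foata--Han increment of $\maj(\sg)$ minus the change in the star-correction sum, the same descent/non-descent case split with the ``each star slid one descent left gains exactly one descent weakly to its right'' accounting, and the same injectivity-via-explicit-inverse strategy. The only place the paper is more explicit is in the inverse map, where your ``locates the appropriate copy of $n$'' is pinned down concretely (the rightmost $n$ that is at the right end or has a larger left neighbor than right neighbor, else the leftmost $n$), and you should note that your left-star formula $1-\chi(g\in\Des(\sg))$ needs the far-right gap excluded since no descent is created there.
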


\begin{proof}
To show that $\phi^{\maj}_{\alpha,k,\ell}$ is injective, we describe its inverse. Given some descent-starred multiset permutation $(\tau, T)$, we first check if the rightmost descent of $(\tau, T)$ is starred. If it is, we remove the star and prepare to add an element to $U$. Otherwise, we prepare to add an element to $B$. We scan $\tau$ for the rightmost $n$ which is either at the right end of $\tau$ or between two entries such that the entry to the left of $n$ is greater than the entry to the right of $n$. If there is no such $n$, we choose the leftmost $n$ in $\tau$. We move all stars that are weakly to the right of this $n$'s position one descent to their right and then remove $n$. Say that, at this point, we have decreased the original major index of $(\tau, T)$ by $i$. We add $i$ to either $U$ or $B$, as decided above. Then we repeat this process until we have removed all $n$'s. It is important to note that the inverse does not depend on knowledge of $\ell$; therefore, each $\rho \in \osp{\alpha}{k}$ is in the image of $\phi^{\maj}_{\alpha, k, \ell}$ for a unique value of $\ell$. 

 In the remainder of the proof, we show that the map cooperates with the statistic $\maj$ as proposed in the lemma. We consider the labeling of the descent-starred multiset permutation $(\sg,S)$ equivalent to $\pi$ at any step during the insertion process. Say $i$ is currently the largest element of $U^{+} \cup B$.

Assume first that the position labeled $i$ is a descent. We use $d$ to denote the number of starred descents to the right of this position. By the insertion method discussed in Subsection \ref{ssec:insertion-sa}, inserting $n$ into the position labeled $i$ increases $\maj(\sg)$ by $i+d$. Furthermore, we have not created a new descent, so the number of descents weakly to the right of any starred position has remained the same. Therefore, by the alternate definition of $\maj(\pi)$ in \eqref{maj-alt}, we have increased $\maj(\pi)$ by $i+d$ after Step 1. 

For Step 2, we move all stars to the right of the position labeled $i$ one descent to their left. Since position $i$ contains an unstarred descent, this is always possible. Furthermore, each of these $d$ stars have picked up an additional descent that is weakly to their right. Using \eqref{maj-alt} again, we see that the change in $\maj(\pi)$ after Step 2 is $i+d-d=i$. 

Finally, we need to consider if $i$ came from $U^{+}$ or $B$. If $i$ came from $U^{+}$, we star the rightmost descent. This subtracts 1 from $\maj(\pi)$. In either case, we have increased the major index of $\pi$ by the amount equal to the element from $U$ or $B$ corresponding to $i$. 

By the insertion process from Subsection \ref{ssec:insertion-sa}, we can relabel the resulting descent-starred permutation and repeat the process as long as we bound the labels as described in Step 4. Then, by the same argument as we used above, the insertion process will modify the major index as described in the statement of the lemma.

Now we consider where the argument must change when the position labeled $i$ is not a descent. We still use $d$ to denote the number of starred descents to the right of position $i$, and we set $c$ to be the number of starred descent to this position's left. Since every starred descent occurs before the position labeled $i$ in the labeling order for $\maj(\sg)$, Step 1 increases $\maj(\pi)$ by $i+c+d$. For Step 2, the position labeled $i$ still contains an unstarred descent, so we can still move the stars as described. As before, this means that each of the $d$ stars to the right of the position labeled $i$ adds a descent to its right, contributing $-d$ to $\maj(\pi)$. Furthermore, inserting $n$ at a non-descent creates a new descent, so each of the $c$ stars to the left of the position labeled $i$ has added a descent to its right, contributing $-c$ to $\maj(\pi)$. Therefore the total increase of $\maj(\pi)$ is $i+c+d-c-d=i$. Steps 3 and 4 do not depend on whether we are inserting at a descent or a non-descent.
\end{proof}

These two insertion maps work together to provide a bijection $\psi_{\alpha, k}: \osp{\alpha}{k} \to \osp{\alpha}{k}$ that takes inversion number to major index. The bijection is described recursively as follows.
\begin{enumerate}
\item Given an ordered multiset partition $\rho \in \osp{\alpha}{k}$, choose $\ell$ such that $\rho$ has $k - \ell$ singleton blocks containing $n$. 
\item Set $(\pi, U, B)$ to be the inverse of $\rho$ under $\phi^{\inv}_{\alpha, k, \ell}$. 
\item Recursively send $\pi$ to $\pi^{\prime} = \psi_{\alpha^{-},\ell}(\pi)$. 
\item Set $\psi_{\alpha, k}(\rho) = \phi^{\maj}_{\alpha, k, \ell} (\pi^{\prime}, U, B)$. 
\end{enumerate}
Finally, in order to begin the recursion, we declare that $\psi_{1^m,m}$ is the identity map. We work through an example of this bijection in Figure \ref{fig:gen-bij}.

\begin{prop}
For any composition $\alpha$ and positive integer $k$, $\psi_{\alpha,k}$ is a bijection with the property
\begin{align*}
\maj(\psi_{\alpha,k}(\rho)) = \inv(\rho)
\end{align*}
for any $\rho \in \osp{\alpha}{k}$. 
\end{prop}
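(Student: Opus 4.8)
The plan is to prove both assertions at once by induction on the length $n$ of $\alpha$ (equivalently on $|\alpha|$), showing at each stage that $\psi_{\alpha,k}$ is a bijection of $\osp{\alpha}{k}$ to itself with $\maj\circ\psi_{\alpha,k}=\inv$. The base case is where the recursion terminates, at the declared identity map: a single remaining letter forces $\osp{\alpha}{k}$ to consist of one ordered multiset partition, on which $\inv=\maj=0$, so the identity trivially carries $\inv$ to $\maj$. For the inductive step I would assume the proposition for $\psi_{\alpha^-,\ell}$ for every $\ell$ and deduce it for $\psi_{\alpha,k}$.

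First I would check that the four-step recursion is well defined. By Lemma \ref{lemma:inv-insertion} the image of $\phi^{\inv}_{\alpha,k,\ell}$ is precisely the set of $\rho\in\osp{\alpha}{k}$ having exactly $k-\ell$ singleton blocks equal to $\{n\}$; hence the number of such blocks in a given $\rho$ determines $\ell$ uniquely in Step (1), and, since that lemma also supplies injectivity, the triple $(\pi,U,B)=(\phi^{\inv}_{\alpha,k,\ell})^{-1}(\rho)$ of Step (2) is well defined with $\pi\in\osp{\alpha^-}{\ell}$. Step (3) applies the inductive hypothesis to form $\pi'=\psi_{\alpha^-,\ell}(\pi)$, and Step (4) returns $\phi^{\maj}_{\alpha,k,\ell}(\pi',U,B)\in\osp{\alpha}{k}$, so $\psi_{\alpha,k}$ is a genuine self-map of the finite set $\osp{\alpha}{k}$.

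The key point, and the step I expect to be the main obstacle, is bijectivity, because the two insertion maps decompose $\osp{\alpha}{k}$ along the same index $\ell$ but in two a priori different ways that must be reconciled. On the $\inv$ side, Lemma \ref{lemma:inv-insertion} partitions $\osp{\alpha}{k}$ by the number $k-\ell$ of singleton $\{n\}$-blocks. On the $\maj$ side, the injectivity in Lemma \ref{lemma:maj-insertion}, together with the remark in its proof that every $\rho\in\osp{\alpha}{k}$ lies in the image of $\phi^{\maj}_{\alpha,k,\ell}$ for a unique $\ell$, shows that the images $\Image(\phi^{\maj}_{\alpha,k,\ell})$ also partition $\osp{\alpha}{k}$. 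I would use this to prove injectivity of $\psi_{\alpha,k}$: if $\psi_{\alpha,k}(\rho_1)=\psi_{\alpha,k}(\rho_2)=\tau$, then $\tau$ lies in $\Image(\phi^{\maj}_{\alpha,k,\ell_i})$ where $\ell_i$ is the value selected for $\rho_i$ in Step (1), so uniqueness of $\ell$ on the $\maj$ side forces $\ell_1=\ell_2=:\ell$; injectivity of $\phi^{\maj}_{\alpha,k,\ell}$ then gives equal triples $(\pi_i',U_i,B_i)$, the inductive injectivity of $\psi_{\alpha^-,\ell}$ gives $\pi_1=\pi_2$, and applying $\phi^{\inv}_{\alpha,k,\ell}$ recovers $\rho_1=\rho_2$. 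Since an injective self-map of a finite set is a bijection, this finishes that half.

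Finally I would verify the statistic identity, which falls out of the two insertion lemmas. With $(\pi,U,B)=(\phi^{\inv}_{\alpha,k,\ell})^{-1}(\rho)$, Lemma \ref{lemma:inv-insertion} gives $\inv(\rho)=\inv(\pi)+\sum_{u\in U}u+\sum_{b\in B}b$; writing $\pi'=\psi_{\alpha^-,\ell}(\pi)$, the inductive hypothesis gives $\maj(\pi')=\inv(\pi)$; and Lemma \ref{lemma:maj-insertion} gives $\maj(\psi_{\alpha,k}(\rho))=\maj\big(\phi^{\maj}_{\alpha,k,\ell}(\pi',U,B)\big)=\maj(\pi')+\sum_{u\in U}u+\sum_{b\in B}b$. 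Combining the three equalities yields $\maj(\psi_{\alpha,k}(\rho))=\inv(\pi)+\sum_{u\in U}u+\sum_{b\in B}b=\inv(\rho)$. The one bookkeeping point to keep in view throughout is that the recursion alters only the $\osp{\alpha^-}{\ell}$ coordinate, leaving $U$ and $B$ untouched when passing from the $\inv$ insertion to the $\maj$ insertion, which is exactly what makes the two $\sum u+\sum b$ contributions match and cancel.
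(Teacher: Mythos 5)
Your proof is correct and takes essentially the same route as the paper's: induction on the length of $\alpha$, with the two insertion lemmas supplying both the decomposition of $\osp{\alpha}{k}$ (by the number of singleton $\{n\}$-blocks on the $\inv$ side, by the uniqueness of $\ell$ on the $\maj$ side) and the chain $\maj(\psi_{\alpha,k}(\rho)) = \maj(\pi') + \sum_{u \in U} u + \sum_{b \in B} b = \inv(\pi) + \sum_{u \in U} u + \sum_{b \in B} b = \inv(\rho)$. The only difference is presentational: you deduce bijectivity from injectivity plus finiteness, while the paper glues the invertible insertion maps together across the matching partitions.
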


\begin{proof}
We will work by induction on $n$, the length of $\alpha$. If $n = 1$, Then the multiset is $\{1^{\alpha_1}\}$ and $k$ must be equal to $\alpha_1$. In this case $\osp{\alpha}{k}$ only has 1 element, which has $\alpha_1$ parts all equal to 1.  This element clearly has $\inv = \maj = 0$. We defined $\psi_{\alpha,k}$ so that it is the identity in this case, which clearly is a bijection and satisfies $\maj(\psi_{\alpha,k}(\pi)) = \inv(\pi)$ for the unique $\pi \in \osp{\alpha}{k}$. 

If $n > 1$, take any element $\rho \in \osp{\alpha}{k}$. We choose $\ell$, $\pi$, $\pi^{\prime}$, $U$, and $B$ as instructed in the definition of $\psi_{\alpha,k}$. The images of $\phi^{\inv}_{\alpha, k, \ell}$ for $\ell = 1$ to $k$ partition $\osp{\alpha}{k}$ into the subsets consisting of elements which have $k-\ell$ singleton $n$ blocks. (Since we assume each $\alpha_i > 0$, an element cannot consist entirely of singleton $n$ blocks.) We also noted while proving Lemma \ref{lemma:maj-insertion} that each $\rho \in \osp{\alpha}{k}$ is in the image of $\phi^{\maj}_{\alpha,k,\ell}$ for a unique value of $\ell$. Since each of these insertion maps is invertible, $\psi_{\alpha,k}$ is a bijection.

Finally, we use Lemmas \ref{lemma:inv-insertion} and \ref{lemma:maj-insertion} along with the inductive hypothesis to compute
\begin{align*}
\maj(\psi_{\alpha,k}(\rho)) &=  \maj(\pi^{\prime}) + \sum_{u \in U} u + \sum_{b \in B} b \\
	&= \inv(\pi) + \sum_{u \in U} u + \sum_{b \in B} b \\
	&= \inv(\rho) \qedhere. 
\end{align*}
\end{proof}

We work through an example of the map $\psi_{A,k}(\pi)$ in Figure \ref{fig:gen-bij}. We repeatedly remove all of the largest elements (and their stars) from the starred permutation and recording the number of inversions lost in the $U$ and $B$ columns. Starred elements contribute to the $U$ column and unstarred elements contribute to the $B$ column. Once we have reached the final row, we use this information to build the $\psi_{\alpha, k}((\sg, S))$ column from bottom to top. We use the elements of $U^{+} \cup M$ to select the positions at which to insert new largest elements. This insertion follows the procedure laid out in the definition of $\phi^{\op{\maj}}$.

\begin{figure}
\begin{center}
\begin{align*}
\begin{array}{l l l l l l}
\alpha			& k 	& \pi				& U	 		& B  		& \psi_{A, k}(\pi) \\\hline
\{3, 1, 2, 1\} 	& 3 	& 134 | 1 | 3 | 12 		& 			& 			& 3_{*}2_{*}1\ 1\ 4\ 3\ 1\\
\{3, 1, 2\}  	& 2	& 13|1|3|12		& \{4\}		& \emptyset	& _{2}3_{*}2_{*}1_{3}1_{4}3_{1}1_{0}\\
				&	&						&			&			& _{\emph{1}}2_{*}1_{2}1\ 3_{*}1_{0}   \\
\{3, 1\} 		& 1	& 1 | 1 | 12				& \{2\}		& \{1\}		& _{1}2_{*}1_{2}1_{\emph{3}}1_{0} \\
\{3\}  			& 0 	& 1|1|1 					& \{0\}		& \emptyset	& _{\emph{1}}1_{2}1_{3}1_{0} \\
\end{array}
\end{align*}
\end{center}
 \caption{
An example of the map $\psi_{A,k}(\pi)$.
}
\label{fig:gen-bij}
\end{figure}

There are a number of other consequences of our proof. For example, the \emph{right-to-left minima} of a permutation $\sg$ are the entries $\sg_i$ such that, for all $j>i$, $\sg_i < \sg_j$. In \cite{rw}, Remmel and the author proved that the $\alpha = 1^n$ case of $\psi_{\alpha,k}$ preserves the right-to-left minima of $\sg$, where $(\sg, S)$ is the descent-starred permutation representation of an ordered partition of $\alpha$.  The same is true for general $\psi_{\alpha,k}$.

\begin{cor}
\label{cor:rlmin}
For any element of $\pi \in \osp{\alpha}{k}$ considered as a descent-starred permutation $(\sg,S)$, consider its image $\psi_{\alpha,k}(\pi)$ as the descent-starred permutation $(\tau, T)$. Then $\sg$ and $\tau$ have the same right-to-left minima. In particular, $\sg_n = \tau_n$.
\end{cor}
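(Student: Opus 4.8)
The plan is to induct on $n$, the length of $\alpha$ (equivalently the largest value occurring in the multiset $A(\alpha)$), following the recursive definition of $\psi_{\alpha,k}$. The base case $n=1$ is immediate, since $\psi_{1^m,m}$ is the identity. For the inductive step, fix $\rho \in \osp{\alpha}{k}$ with underlying permutation $\sg$, and let $(\pi, U, B) = (\phi^{\inv}_{\alpha,k,\ell})^{-1}(\rho)$, $\pi' = \psi_{\alpha^{-},\ell}(\pi)$, and $\psi_{\alpha,k}(\rho) = \phi^{\maj}_{\alpha,k,\ell}(\pi', U, B)$ with underlying permutation $\tau$. Write $p = \sg(\pi)$ and $p' = \sg(\pi')$ for the underlying permutations of $\pi$ and $\pi'$; these are permutations of $A(\alpha^{-})$, i.e.\ of the multiset with the copies of $n$ removed, and the inductive hypothesis says that $p$ and $p'$ have the same right-to-left minima.

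The first reduction is that deleting all copies of $n$ does not disturb the right-to-left minima of the smaller entries. Since $n$ is the maximum value, an entry $x < n$ automatically satisfies $x < \sg_j$ whenever $\sg_j = n$, so $x$ is a right-to-left minimum of $\sg$ if and only if it is one of $p$. Moreover, a copy of $n$, being maximal, can be a right-to-left minimum only in the final position. Hence the sequence of right-to-left minima of $\sg$ equals that of $p$, with the value $n$ appended at the end precisely when $\sg_{|\alpha|} = n$; the identical statement relates $\tau$ to $p'$. As the minima of $p$ and $p'$ agree by induction, everything reduces to showing that $\sg_{|\alpha|} = n \iff \tau_{|\alpha|} = n$, and that when neither last entry is $n$ the two still coincide.

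The crux is therefore to track the rightmost position through both insertion maps. Writing each block in decreasing order, the last entry of the underlying permutation is the minimum of the rightmost block, so $\sg_{|\alpha|} = n$ exactly when the rightmost block of $\rho$ is the singleton $\{n\}$. In $\phi^{\inv}_{\alpha,k,\ell}$ the rightmost block of $\pi$ receives label $0$, and a new singleton $\{n\}$ is created to its right exactly when $0 \in B$; since an element of $U$ only ever adjoins an $n$ to an existing block, one checks that the rightmost block of $\rho$ is $\{n\}$ if and only if $0 \in B$, and otherwise the last entry of $\sg$ equals the last entry of $p$. In $\phi^{\maj}_{\alpha,k,\ell}$ the label $0$ always marks the position after the final entry, and because $U^{+} \subseteq [1,\ell]$ this position is filled by an inserted $n$ exactly when $0 \in B$ (processed last, as the minimal element of $U^{+} \cup B$); otherwise every insertion and every star-shift leaves the final entry fixed, so $\tau_{|\alpha|}$ equals the last entry of $p'$. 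Combining these two computations with the inductive equality of the last entries of $p$ and $p'$ gives $\sg_{|\alpha|} = \tau_{|\alpha|}$ in every case, which both completes the induction and yields the asserted special relation for the last entry.

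I expect the main obstacle to be the bookkeeping in the crux paragraph: verifying rigorously that in $\phi^{\maj}$ no insertion at a position carrying a nonzero label, and no star-shift or star-addition, can alter the final entry, and that the tie-breaking and relabeling conventions force the zeros of $B$ to be exactly what place an $n$ in the last position. Once this rightmost-position analysis is pinned down, the reduction to the smaller permutation via the maximality of $n$ is routine.
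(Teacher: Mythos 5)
Your proof is correct and takes essentially the same approach as the paper: the paper's entire argument is the single observation that the insertion algorithms change the last letter of the underlying permutation only when $0$ is an element of the multiset $B$, which is exactly the crux you identify and verify for both $\phi^{\inv}_{\alpha,k,\ell}$ and $\phi^{\maj}_{\alpha,k,\ell}$. Your surrounding scaffolding (reducing the full right-to-left minima statement to the last entry via the maximality of $n$, and inducting along the recursive definition of $\psi_{\alpha,k}$) is just the natural completion of that remark, which the paper leaves implicit.
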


The crux of the proof is that the insertion algorithms only change the last element of $\sg$ when 0 is an element of the multiset $B$.

\subsection{Insertion for $\dinv$}
\label{ssec:omp-dinv}
%move before maj subsection?

In order to prove that $\dinv$ is equidistributed with $\inv$ and $\maj$, we define an insertion map for $\dinv$
\begin{align*}
\phi^{\dinv}_{\alpha,k,\ell} &: \osp{\alpha^{-}}{\ell} \times \binom{[0, \ell-1]}{\alpha_n - k + \ell} \times \multiset{[0,\ell]}{ k - \ell} \to \osp{\alpha}{k} .
\end{align*}
Given $\pi \in \osp{\alpha^{-}}{\ell}$, we will use two different labelings to insert the $n$'s into $\pi$. We label the positions between the blocks, as well as the positions at either end of $\pi$, with the labels $0, 1, \ldots, \ell$ from right to left. We will call these the \emph{gap labels}. 

We will label the $\ell$ blocks of $\pi$ with the labels $0, 1, \ldots, \ell-1$. Set $h$ to be the maximum height of any element in $\pi$. We obtain the \emph{block labels} of $\pi$ by, beginning with the label 0, repeatedly
\begin{enumerate}
\item labeling the elements of $\pi$ at height $h$ from left to right with increasing labels, and
\item decrementing $h$.
\end{enumerate}
We repeat until $h < 0$. For example, if $\pi = 124|2|13|134|1$, the block labels of $\pi$ are $0|3|2|1|4$.

With these labels in hand, we define $\phi^{\dinv}_{\alpha,k,\ell}(\pi, U, B)$ by inserting an $n$ into each block that receives a block label $u \in U$ and into each gap that receives a gap label $b \in B$.  As usual, the key is to prove that this map cooperates with the statistic $\dinv$.

\begin{lemma}[Insertion for $\dinv$]
\label{lemma:dinv-insertion}
For any composition $\alpha$ of length $n$ and positive integers $\ell \leq k$, $\phi^{\dinv}_{\alpha,k,\ell}$ is well-defined and injective. The image of $\phi^{\dinv}_{\alpha,k,\ell}$ is the ordered multiset partitions $\pi \in \osp{\alpha}{k}$ with exactly $k - \ell$ singleton blocks containing $n$. Furthermore, for any $\pi \in \osp{\alpha}{\ell}$, $U \in \binom{[0, \ell-1]}{\alpha_n - k + \ell}$, and $B \in \multiset{[0,\ell]}{k - \ell}$, 
\begin{align*}
\op{\dinv} \left(\phi^{\op{\dinv}}_{\alpha, k, \ell} (\pi, U, B) \right) &= \op{\dinv} (\pi) + \sum_{u \in U} u + \sum_{b \in B} b .
\end{align*}
\end{lemma}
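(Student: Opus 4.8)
The plan is to compute $\dinv$ of the inserted object $\rho = \phi^{\dinv}_{\alpha,k,\ell}(\pi, U, B)$ by sorting the diagonal inversions of $\rho$ according to whether or not they involve one of the newly inserted copies of $n$. Since $\pi$ contains no $n$'s, every copy of $n$ in $\rho$ is new; I will call the entries inherited from $\pi$ the \emph{old} elements. Adding an $n$ to the top of an existing block leaves the heights of all old elements unchanged, and inserting singleton $\{n\}$ blocks leaves the left-to-right order of the old blocks unchanged; hence the diagonal inversions of $\rho$ among old elements are in height- and order-preserving bijection with $\Dinv(\pi)$, contributing exactly $\dinv(\pi)$. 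The crucial simplifying observation is that there are \emph{no} diagonal inversions between two new copies of $n$: a primary inversion would force two equal entries $n$ at a common height to satisfy $n > n$, and a secondary inversion would force $n < n$. Consequently every diagonal inversion involving a new $n$ pairs exactly one new $n$ with one old element, so the total number of such inversions is the sum, over the inserted $n$'s, of the inversions each forms with old elements. This additivity is what lets us ignore the order of insertion.

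Next I would evaluate, for a single inserted $n$, the number of diagonal inversions it forms with old elements and show it equals that $n$'s label. For a $U$-insertion placing $n$ at the top of the block at position $i_0$ of original height $h_0$ (so $n$ sits at height $h_0+1$), a short case check of the two defining clauses of $\Dinv$ — using that $n$ is maximal and that singleton $\{n\}$ blocks carry no old entries — shows the new $n$ forms a primary inversion with each original block to its right of height $> h_0$, a secondary inversion with each original block to its left of height $\geq h_0$, and nothing else. I would then verify the counting identity
\[
\#\{i' > i_0 : \operatorname{ht}(i') > h_0\} + \#\{i' < i_0 : \operatorname{ht}(i') \geq h_0\} = \#\{i' : \operatorname{ht}(i') > h_0\} + \#\{i' < i_0 : \operatorname{ht}(i') = h_0\},
\]
whose right-hand side is precisely the number of blocks preceding $i_0$ in the order (maximal height descending, then position ascending), i.e.\ the block label of $i_0$. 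For a $B$-insertion placing $\{n\}$ at the gap with gap label $b$, the same analysis shows $n$ forms a primary inversion with each of the $b$ original blocks to its right and nothing else, contributing $b$. Summing over all insertions yields $\dinv(\rho) = \dinv(\pi) + \sum_{u \in U} u + \sum_{b \in B} b$.

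For well-definedness and injectivity I would note that the block labels give a bijection from the $\ell$ blocks onto $[0,\ell-1]$ and the gap labels a bijection from the $\ell+1$ gaps onto $[0,\ell]$, so every label appearing in $U$ or $B$ names an actual block or gap; since $U$ is a set, no block receives two $n$'s, and $\rho$ is a genuine element of $\osp{\alpha}{k}$. The inverse simply strips all $n$'s from $\rho$: the singleton $\{n\}$ blocks are deleted (their gap labels, computed from the resulting $\pi$, forming the multiset $B$) and the unique top $n$ is removed from each larger block containing one (its block label recorded into $U$), recovering $(\pi, U, B)$. Because the labels are determined by $\pi$ and each inserted $n$ is recoverable, the map is a bijection onto its image, which is exactly the $\rho \in \osp{\alpha}{k}$ whose singleton $n$-blocks are precisely the $k-\ell$ produced by $B$.

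The step I expect to be the main obstacle is carrying out the two single-insertion counts cleanly — in particular, confirming that the other newly inserted $n$'s (both the tops added to neighboring blocks and the freshly created singleton blocks) never contribute to, nor disrupt, these counts, so that the naive sum of labels is correct. The ``no inversions between two new $n$'s'' observation resolves the first concern, and it must be paired with the remark that each singleton $\{n\}$ block has height $1$ and carries no old entry, rendering it invisible to the height-based counts governing the $U$-insertions; together these guarantee that the per-insertion contributions decouple exactly as the statement requires.
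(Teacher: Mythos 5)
Your proposal is correct and follows essentially the same route as the paper: gap insertions create exactly $b$ primary inversions with the blocks to their right, block insertions create one inversion per taller block plus one per equal-height block to the left (which is exactly the block label), and the decoupling of insertions rests on the observation that two $n$'s can never form a diagonal inversion. Your treatment is somewhat more detailed than the paper's — in particular the explicit counting identity and the description of the inverse map for injectivity and the image — but the underlying argument is the same.
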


\begin{proof}
%here
It is clear that inserting an $n$ at a gap labeled $b$ creates $b$ new diagonal inversions, one with each block to the right of the gap, and does not affect any other diagonal inversions. 

Now say we insert an $n$ into a block $\pi_i$ labeled $u$. Say that $\pi_i$ had size $s$ before we added an $n$. After inserting an $n$, it has size $s+1$ with an $n$ at height $s+1$. We claim that we have created one new diagonal inversion for each block of size greater than $s$ and for each block of size $s$ that is to the left of $\pi_i$. First, consider a block $\pi_j$ with $|\pi_j| > s$. If $j > i$, then $(s+1,i,j)$ is a new primary diagonal inversion; if $j < i$, then $(s,j,i)$ is a new secondary diagonal inversion. There are no other new diagonal inversions between $\pi_i$ and $\pi_j$. Now we consider $\pi_j$ with $j < i$ and $|\pi_j| = s$. There is a new secondary diagonal inversion $(s,j,i)$. By the definition of our insertion map, there are exactly $u$ such blocks, so we have created $u$ new diagonal inversions. 

Finally, we note that, since two $n$'s cannot form a diagonal inversion, each insertion of an $n$ does not affect previous or subsequent insertions. 
\end{proof}

It follows from Lemma \ref{lemma:dinv-insertion} that $\distosp{\dinv}{\alpha}{k}{q} = \distosp{\inv}{\alpha}{k}{q} = \distosp{\maj}{\alpha}{k}{q}$. We can form a bijection between any pair of these statistics by using the definition of $\psi_{\alpha,k}$ as a template. Furthermore, any such bijection preserves right-to-left minima.  

\subsection{The statistic $\minimaj$}
\label{ssec:minimaj}

Data computed in Sage suggests that $\minimaj$ shares the distribution of $\inv$, $\maj$, and $\dinv$; unfortunately, we cannot prove this with the techniques currently available to us. However, data yields the following conjectures as to how this statistic relates to the previous statistics. Given an ordered set partition $\pi \in \ospn{n}$, we say the \emph{shape} of $\pi$, written $\shape(\pi)$, is the composition whose $i$th block is equal to the size of $\pi^{i}$, the $i$th block in $\pi$ from left to right.

\begin{conj}
\label{conj:minimaj}
For any composition $\beta$ of length $n$,
\begin{align*}
\sum_{\stackrel{\pi \in \ospn{n}}{\shape(\pi) = \beta}} q^{\inv(\pi)} &= \sum_{\stackrel{\pi \in \ospn{n}}{\shape(\pi) = \beta}} q^{\minimaj(\pi)} .
\end{align*}
Also, for any composition $\alpha$
\begin{align*}
\distosp{\minimaj}{\alpha}{k}{q} &= \distosp{\inv}{\alpha}{k}{q} = \distosp{\maj}{\alpha}{k}{q} = \distosp{\dinv}{\alpha}{k}{q}.
\end{align*}
Note that neither statement directly implies the other.
\end{conj}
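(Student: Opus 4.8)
The plan is to treat the two displayed identities separately, since, as the conjecture notes, neither implies the other: the first fixes the entire shape $\beta$ of an ordered set partition, whereas the second fixes only the content $\alpha$ together with the block count $k$. I would aim for a bijective proof of the second identity that mirrors Theorem~\ref{thm:eq-multiset}, and a separate, shape-sensitive argument for the first.

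For the content-graded identity $\distosp{\minimaj}{\alpha}{k}{q} = \distosp{\inv}{\alpha}{k}{q}$, the natural first attempt is to extend the insertion machinery by constructing a map
\[
\phi^{\minimaj}_{\alpha,k,\ell} : \osp{\alpha^{-}}{\ell} \times \binom{[0,\ell-1]}{\alpha_n-k+\ell} \times \multiset{[0,\ell]}{k-\ell} \to \osp{\alpha}{k}
\]
satisfying the same additive property $\minimaj(\phi^{\minimaj}_{\alpha,k,\ell}(\pi,U,B)) = \minimaj(\pi) + \sum_{u \in U} u + \sum_{b \in B} b$ and having the property (as in Lemma~\ref{lemma:maj-insertion}) that each $\rho \in \osp{\alpha}{k}$ lies in its image for a unique $\ell$. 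Given such a map, composing it with the inverse of $\phi^{\inv}_{\alpha,k,\ell}$ exactly as in the recursive definition of $\psi_{\alpha,k}$ would produce a bijection carrying $\inv$ to $\minimaj$, and the identity would follow by the same induction used in the proposition above.

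To build and verify $\phi^{\minimaj}$ I would work directly with the unfurling $\pi \mapsto \tau(\pi)$ from the definition of $\minimaj$ and track how $\maj(\tau)$ responds to inserting the $\alpha_n$ copies of $n$. Since $n$ is maximal, a new singleton block $\{n\}$ forces the block immediately to its left to be written with its maximum at the right, and a copy of $n$ inserted into an existing block lands just after that block's largest original entry in the cyclic rotation; both effects can be read off as controlled changes to the descent set of $\tau$. I would process the insertions in a single right-to-left sweep and try to show that these changes telescope into the target sum $\sum_{u} u + \sum_{b} b$ under a suitable gap/block labeling analogous to the one used for $\dinv$. \textbf{The main obstacle is precisely that this sweep is not independent:} because the chosen rotation of each block depends on the leftmost recorded entry of the block to its right, inserting one $n$ can alter the rotations---and hence the boundary descents---of a whole run of blocks to its left, so unlike the $\inv$, $\maj$, and $\dinv$ insertions the individual insertions interact. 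Showing that these cascading changes nonetheless sum correctly is the heart of the difficulty, and is what has so far blocked a direct proof. As a fallback that avoids a clean insertion map, I would instead form the content-graded generating functions $\sum_{\pi} q^{\inv(\pi)} x^{\,\mathrm{content}(\pi)}$ and $\sum_{\pi} q^{\minimaj(\pi)} x^{\,\mathrm{content}(\pi)}$ over $\pi \in \osp{\cdot}{k}$, prove that both are the \emph{same} symmetric function---for instance by exhibiting a crystal structure or an explicit Schur expansion for the $\minimaj$ side---and then extract the coefficient of the monomial $x^{\alpha}$ to recover the identity at fixed content.

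Finally, the first identity cannot be obtained by such coefficient extraction, because all ordered set partitions share the content $1^n$ and differ only in shape; fixing $\beta$ is strictly finer than anything the symmetric function sees. Here I would seek a shape-preserving bijection on $\{\pi \in \ospn{n} : \shape(\pi) = \beta\}$ sending $\inv$ to $\minimaj$, again built from the unfurling $\tau$ together with a Foata/Carlitz-type rearrangement of $\tau$ that respects the block boundaries encoded by $\beta$. I expect this shape-refined statement to require the most genuinely new input, since the recursion on the largest letter no longer preserves the number of blocks and the cascading behavior above must now be controlled block by block rather than only in aggregate.
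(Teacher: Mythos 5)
There is nothing in the paper to compare your attempt against: the statement you were given is Conjecture~\ref{conj:minimaj}, which the paper explicitly does \emph{not} prove. The author states that the equidistribution of $\minimaj$ with $\inv$, $\maj$, and $\dinv$ is supported by Sage data but ``cannot be proved with the techniques currently available,'' and defers to a forthcoming proof by Rhoades. So the relevant question is not whether your route differs from the paper's, but whether your proposal actually closes the gap the paper leaves open. It does not.

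Your proposal is a research plan, not a proof, and you say so yourself at its critical junctures. The centerpiece --- an insertion map $\phi^{\minimaj}_{\alpha,k,\ell}$ with the additive property $\minimaj(\phi^{\minimaj}_{\alpha,k,\ell}(\pi,U,B)) = \minimaj(\pi) + \sum_{u \in U} u + \sum_{b \in B} b$ --- is never constructed; you only describe what it would have to do, and then correctly identify why the construction fails: because the rotation chosen for each block in the word $\tau(\pi)$ depends on the leftmost recorded entry of the block to its right, inserting a single $n$ can change the descent contributions of an unbounded run of blocks to its left, so the insertions are not independent and the per-insertion change in $\minimaj$ is not a function of a fixed label. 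This diagnosis is sound (and is precisely the ``unique difficulty'' the paper alludes to), but diagnosing the obstruction is not the same as overcoming it. The two fallbacks fare no better as proofs: the symmetric-function route requires exhibiting a crystal structure or Schur expansion for the $\minimaj$ generating function, which you do not supply (though, for what it is worth, this is the direction the subsequent literature actually took); and for the shape-refined first identity you only state that a shape-preserving bijection ``would be sought,'' while noting it needs ``genuinely new input.'' Every load-bearing step of the argument is conditional on an unconstructed map or an unproven symmetry, so the conjecture remains exactly as open at the end of your proposal as at the beginning.
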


In personal communication, Brendon Rhoades has notified the author that he is currently preparing a proof of both parts of this conjecture. 

\subsection{The Mahonian distribution on $\osp{\alpha}{k}$}
\label{ssec:omp-dist}

In this subsection, we describe the distribution shared by the statistics $\inv$, $\maj$, and $\dinv$ on $\osp{\alpha}{k}$. Define the \emph{Mahonian distribution} on $\osp{\alpha}{k}$ to be the polynomial
\begin{align*}
\qmah{\alpha}{k}{q} &= \distosp{\inv}{\alpha}{k}{q} = \distosp{\maj}{\alpha}{k}{q} = \distosp{\dinv}{\alpha}{k}{q} .
\end{align*}
We know from MacMahon's theorem that $\qmah{\alpha}{|\alpha|}{q}= \qbinom{|\alpha|}{\alpha_{1}, \ldots, \alpha_{n}}$. In general, we can only give a recursive description of $\qmah{\alpha}{k}{q}$. Applying standard $q$-binomial identities to the insertion maps given above, we see
\begin{align}
\label{dist-rec}
\qmah{\alpha}{k}{q} &= \sum_{l=1}^{k} q^{\binom{\alpha_n-k+\el}{2}} \qbinom{\el}{\alpha_n-k+\el} \qbinom{k}{\el} \qmah{\alpha^{-}}{\el}{q}
\end{align}
with initial condition
\begin{align}
\label{initial-condition}
\qmah{(\alpha_1)}{k}{q} &= \chi(k = \alpha_1).
\end{align} 
We can cancel terms of (\ref{dist-rec}) to obtain the identity
\begin{align}
\label{mah-rec}
\qmah{\alpha}{k}{q} &= \sum_{l=1}^{k} q^{\binom{\alpha_n-k+\el}{2}} \qbinom{k}{\alpha_n-k+\el,k-\alpha_n,k-\el} \qmah{\alpha^{-}}{\el}{q} .
\end{align}

We can obtain another expression for this polynomial in the special case $\alpha_{1} = \ldots = \alpha_{n} = a$. Before we can state this expression, we must define a $q$-analog of the (generalized) Stirling numbers of the second kind. The $q=1$ case of these polynomials appear in \cite{boson}, Equations (20) and (21). We define these polynomials recursively by
\begin{align}
\label{allqstir}
\allqstir{n}{k}{q}{a} &= \sum_{i=1}^{k}  q^{\binom{a-k+i}{2}} \qbinom{i}{a -k+i} \frac{[a]_{q}!}{[k-i]_{q}!} \allqstir{n-1}{i}{q}{a} \\
\allqstir{1}{k}{q}{a} &= \chi(k=a) .
\end{align}
Note that, at $a=1$, the recursion simplifies to the $q$-Stirling numbers $\qstir{n}{k}{q}$.

\begin{prop}
\label{prop:a}
When $\alpha = a^n$, 
\begin{align*}
\qmah{\alpha}{k}{q} &= \frac{\qint{k}!}{\left(\qint{a}!\right)^{n}} \allqstir{n}{k}{q}{a} .
\end{align*}
When $a=1$, this formula reduces to the formula
\begin{align*}
\qmah{n}{k}{q} &= \qint{k}! \qstir{n}{k}{q}.
\end{align*}
\end{prop}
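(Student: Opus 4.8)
The plan is to proceed by induction on $n$, the length of the composition $a^n$, showing that the proposed right-hand side satisfies the same recursion and initial condition as $\qmah{a^n}{k}{q}$. In the case $\alpha = a^n$ we have $\alpha_n = a$ and $\alpha^- = a^{n-1}$, so the recursion \eqref{dist-rec} reads
\begin{align*}
\qmah{a^n}{k}{q} &= \sum_{\el=1}^{k} q^{\binom{a-k+\el}{2}} \qbinom{\el}{a-k+\el} \qbinom{k}{\el} \qmah{a^{n-1}}{\el}{q}.
\end{align*}
This already has nearly the same shape as the defining recursion \eqref{allqstir} for $\allqstir{n}{k}{q}{a}$, so the bulk of the proof is simply reconciling the two prefactors.

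For the base case $n=1$, the initial condition \eqref{initial-condition} gives $\qmah{(a)}{k}{q} = \chi(k=a)$, while the proposed formula evaluates to $\frac{\qint{k}!}{\qint{a}!}\allqstir{1}{k}{q}{a} = \frac{\qint{k}!}{\qint{a}!}\chi(k=a)$; since the prefactor $\qint{k}!/\qint{a}!$ equals $1$ exactly when $k=a$, the two agree. For the inductive step I would substitute the inductive hypothesis $\qmah{a^{n-1}}{\el}{q} = \frac{\qint{\el}!}{(\qint{a}!)^{n-1}}\allqstir{n-1}{\el}{q}{a}$ into the displayed recursion and then apply the single identity $\qbinom{k}{\el}\qint{\el}! = \qint{k}!/\qint{k-\el}!$. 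This turns the product $\qbinom{k}{\el}\qint{\el}!$ into $\qint{k}!/\qint{k-\el}!$, after which each term matches the corresponding $i=\el$ term of $\frac{\qint{k}!}{(\qint{a}!)^n}\allqstir{n}{k}{q}{a}$ read off from \eqref{allqstir}, establishing the formula for level $n$.

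Finally, the $a=1$ specialization is immediate: $\qint{1}!=1$ collapses the prefactor to $\qint{k}!$, and the remark following \eqref{allqstir} identifies $\allqstir{n}{k}{q}{1}$ with the $q$-Stirling number $\qstir{n}{k}{q}$, yielding $\qmah{n}{k}{q}=\qint{k}!\,\qstir{n}{k}{q}$.

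Since the argument is essentially a term-by-term matching, I do not expect a serious obstacle. The only points requiring care are the bookkeeping on the powers of $\qint{a}!$ (one factor is absorbed at each inductive step, converting $(\qint{a}!)^n$ into $(\qint{a}!)^{n-1}$) and the choice of recursion: it is the uncancelled form \eqref{dist-rec}, rather than \eqref{mah-rec}, whose prefactor $\qbinom{\el}{a-k+\el}\qbinom{k}{\el}$ combines with the $\qint{\el}!$ from the inductive hypothesis, via $\qbinom{k}{\el}\qint{\el}!=\qint{k}!/\qint{k-\el}!$, to reproduce exactly the factor $\qbinom{i}{a-k+i}\,\qint{a}!/\qint{k-i}!$ from the definition of $\allqstir{n}{k}{q}{a}$ once the overall constant $\qint{k}!/(\qint{a}!)^n$ is extracted.
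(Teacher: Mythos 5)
Your proposal is correct and follows essentially the same route as the paper: induction on $n$, with the base case from \eqref{initial-condition} and the inductive step obtained by substituting the hypothesis into the recursion and reorganizing $q$-factorials to match the defining recursion \eqref{allqstir} of $\allqstir{n}{k}{q}{a}$. The only cosmetic difference is that you start from \eqref{dist-rec} and cancel $\qbinom{k}{\el}\qint{\el}! = \qint{k}!/\qint{k-\el}!$ inline, whereas the paper starts from the already-cancelled form \eqref{mah-rec}; these are the same computation.
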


\begin{proof}
We work by induction on $n$. When $n=1$, the right-hand side of Proposition \ref{prop:a} equals
\begin{align*}
\frac{\qint{k}!}{\left(\qint{a}!\right)^n} \allqstir{1}{k}{q}{a} = \chi(k=a)
\end{align*}
which is equal to the left-hand side by (\ref{initial-condition}). 

If $n > 1$, we use the induction hypothesis with the recursion (\ref{mah-rec}) to compute
\begin{align*}
 \qmah{\alpha}{k}{q} &= \sum_{\el=1}^{k} q^{\binom{a-k+\el}{2}} \qbinom{k}{a-k+\el,k-a,k-\el} \left( \frac{\qint{\el}!}{\left(\qint{a}\right)^{n-1}} \allqstir{n-1}{\el}{q}{a} \right) \\
&= \frac{\qint{k}!}{\left( \qint{a} !\right)^{n-1}} \sum_{\el=1}^{k} q^{\binom{a-k+\el}{2}} \frac{\qint{\el}!}{\qint{a-k+\el}! \qint{k-a}! \qint{k-\el}!} \allqstir{n-1}{\el}{q}{a} \\
&= \frac{\qint{k}!}{\left( \qint{a} !\right)^{n}} \sum_{\el=1}^{k} q^{\binom{a-k+\el}{2}} \qbinom{\el}{a-k+\el} \frac{\qint{a}!}{\qint{k-\el}!} \allqstir{n-1}{\el}{q}{a} \\
&= \frac{\qint{k}!}{\left( \qint{a} !\right)^n} \allqstir{n}{k}{q}{a} 
\end{align*}
by (\ref{allqstir}) with $i=\el$. 
\end{proof}

It would be interesting to give a more combinatorial proof of Proposition \ref{prop:a}, especially one that would shed light on why each of the three terms $\qint{k}!$, $(\qint{a}!)^n$, and $\allqstir{n}{k}{q}{a}$ appears.

\section{Extending Macdonald polynomials}
\label{sec:extended-mac}

In this subsection, we apply our $\inv$ and $\maj$ statistics to the combinatorial definition of Macdonald polynomials for hook shapes, as given (for any shape) in \cite{hhl}. This yields functions whose coefficients are four-variable polynomials instead of the usual two-variable polynomial coefficients. Our previous work allows us to prove that these polynomials are symmetric and to expand them into Schur functions. 

For convenience, for any statistic $\stat$, let 
\begin{align*}
\stat_{[a, b]}(\sg) = \stat(\sg_{a} \sg_{a+1} \ldots \sg_{b}).
\end{align*}
We will also need two new statistics, one on permutations and one on descent-starred permutations:
\begin{align*}
\rlmaj(\sg) &= \sum_{i \in \Des(\sg)} (n-i) \\
\op{\rlmaj}((\sg, S)) &= \rlmaj(\sg) - \sum_{i \in S} |\Des(\sg) \cap [1, i]|.
\end{align*}
We set $\widetilde{H}_{n, m}(x; q, t, u, v)$ equal to
\begin{align*}
&\sum_{\sg \in \{1,2,\ldots\}^n} q^{\inv_{[m+1, n]}(\sg)} t^{\maj_{[1, m]}(\sg)} x^{\sg}  \\
&\ \times \prod_{i \in \Des(\sg) \cap [m+1, n]} \left(1 + u/q^{\inv_{[m+1, n]}^{\Box, i}(\sg) + 1} \right) \prod_{j=1}^{|\Des(\sg) \cap [1, m]|} \left(1 + v/t^{j}\right) .
\end{align*}
We will refer to these polynomials as \emph{starred Macdonald polynomials of hook shape}. These polynomials can be thought of as a sum over all descent-starred multiset permutations $(\sg, S)$ in the Young diagram for the shape $(n-m, 1^{m})$ where we calculate our $\op{\maj}$ and $\op{\inv}$ statistics down the column and across the row, respectively. When $u = v = 0$, we obtain the (modified) Macdonald polynomial for the shape $(n-m, 1^{m})$, as proven in \cite{hhl}. Here is an example filling for $n=8$ and $m=3$.
\begin{center}
\begin{ytableau}
2 \\
_{\phantom{\ast}}5_{\ast} \\
3 \\
_{\phantom{\ast}}6_{\ast} & 1 & 7 & 4 & 8
\end{ytableau}
\end{center}
The numbers followed by stars are the starred descents. The weight of this filling would be $q^2 t$, since the descent-starred permutation $6_{\ast}1 7 4 8$ has 2 inversions, both ending at the 4, and $25_{\ast}36$ has major index equal to 1. 

Our main result in this section allows us to transfer many important properties of the Macdonald polynomials to the starred Macdonald polynomials of hook shape. The $u=v=0$ case of this result was originally proved in \cite{stem}. The result requires some definitions on standard Young tableaux. The \emph{descent set} of a standard Young tableau $T$ (in French notation) is the set of all $i$ such that $i+1$ is strictly north (and weakly west) of $i$ in $T$. Then, for any standard Young tableau $T$ with $n$ entries, 
\begin{equation*}
\begin{aligned}[c]
\maj(T) &= \sum_{i \in \Des(T)} i
\end{aligned}
\qquad
\begin{aligned}
\rlmaj(T) &= \sum_{i \in \Des(T)} (n - i) .
\end{aligned}
\end{equation*}
For example, here is a standard Young tableau with descent set $\{2, 5\}$. 
\begin{displaymath}
\begin{ytableau}
3 & 4 & 6 \\
1 & 2 & 5 & 7
\end{ytableau}
\end{displaymath}

\begin{thm}
\label{thm:mac}
The starred Macdonald polynomials of hook shape are symmetric. Furthermore, for $\lambda \vdash n$ the coefficient of the Schur function $s_{\lambda}(x)$ in \\ $\widetilde{H}_{n, m}(x; q, t, u, v)$ is equal to
\begin{align*}
&\sum_{T \in \syt(\lambda)} q^{\rlmaj_{[m+1, n]}(T)} t^{\maj_{[1, m]}(T)} \\
&\times \prod_{i = 1}^{|\Des(T) \cap [m+1, n]|} \left(1 + uq^{-i} \right)   \prod_{j=1}^{|\Des(T) \cap [1, m]|} \left(1 + vt^{-j}\right) .
\end{align*}
\end{thm}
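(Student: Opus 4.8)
The plan is to prove both claims — symmetry and the Schur expansion — simultaneously by reducing the statement to a sum over standard Young tableaux and then applying the equidistribution results already established. The starred Macdonald polynomial $\widetilde{H}_{n,m}$ is a sum over descent-starred fillings of the hook $(n-m, 1^m)$ weighted by monomials $x^{\sg}$. The key structural observation is that because the column (the $\maj$-statistic region, positions $[1,m]$) and the row (the $\inv$-statistic region, positions $[m+1,n]$) interact only through which entries are placed where, the polynomial factors naturally over the choice of content. I would first standardize: group fillings by the standard Young tableau $T$ recording the relative order of entries (breaking ties in a fixed reading order, as in the classical treatment of \cite{hhl}). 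The weight $x^{\sg}$ then becomes, after summing over all content choices compatible with a fixed $T$, a Gessel fundamental quasisymmetric function $F_{\Des(\ldots)}$, and the statistics $\inv$, $\maj$, and the starred descent products depend only on $T$ and the star data.

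First I would set up the standardization map carefully, showing that the $u$- and $v$-products behave correctly under it. The $v$-product $\prod_{j=1}^{|\Des(\sg)\cap[1,m]|}(1+v/t^j)$ is already indexed only by the count of descents in the column, so by the alternate description of $\maj$ in \eqref{maj-alt} together with Theorem \ref{thm:eq-multiset}, the entire column contribution (the $t$-power times the $v$-product) over all fillings with a fixed column-content is governed by $\distosp{\maj}{\alpha}{k}{q}$-type distributions and equals, after standardization, $t^{\maj_{[1,m]}(T)}\prod_{j}(1+vt^{-j})$ summed appropriately. The row contribution is the genuinely new part: the $u$-product is indexed by $\inv^{\Box,i}$ refinements, and here I would invoke \eqref{main-result} — equivalently the $\inv$-side of Theorem \ref{thm:eq-multiset} rewritten via the descent-starred interpretation — to convert the $\inv$-weighted row sum with its $\prod_i(1+u/q^{\inv^{\Box,i}+1})$ factor into the desired $q^{\rlmaj}\prod_i(1+uq^{-i})$ form. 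The appearance of $\rlmaj$ rather than $\maj$ on the tableau side reflects reading the row statistic in the opposite (right-to-left) orientation, and I would track this orientation reversal explicitly, using the definitions of $\rlmaj$ and $\op{\rlmaj}$ given just before the theorem.

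After these two reductions the coefficient of each monomial quasisymmetric piece is seen to depend only on $T$, so I would assemble the sum as $\sum_{T}(\text{weight in } q,t,u,v)\,F_{\Des(T)}(x)$, and then symmetry follows from the standard fact that $\sum_{T\in\syt(\lambda)}F_{\Des(T)}=s_\lambda$: since the $q,t,u,v$-weight is constant on $\syt(\lambda)$ once we fix $\lambda$ and the descent-refined data, the quasisymmetric functions collapse into Schur functions, giving both the symmetry and the explicit Schur coefficient in one stroke. I expect the main obstacle to be the row computation — correctly matching the refined inversion statistic $\inv^{\Box,i}$ and its product against the orientation-reversed $\rlmaj$ on tableaux, and verifying that the bijection underlying Theorem \ref{thm:eq-multiset} (which preserves right-to-left minima by Corollary \ref{cor:rlmin}) is exactly what makes the standardization compatible with the $u$-product. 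The $v$-side and the final quasisymmetric-to-Schur step are comparatively routine once the framework is in place.
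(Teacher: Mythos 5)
Your proposal hits the paper's central step exactly. The genuinely new ingredient in the paper's proof is the conversion of the row statistics from $\inv$-form (the $u$-product indexed by the refined inversion numbers $\inv^{\Box,i}$) to $\rlmaj$-form (the $u$-product indexed by $1,\dots,|\Des|$), accomplished by conjugating the bijection $\psi_{\beta,\ell}$ of Theorem \ref{thm:eq-multiset} by reverse and complement, with Corollary \ref{cor:rlmin} guaranteeing that the corner entry $\sg_{m+1}$ is fixed so that the column weight (which sees position $m$ through $\Des(\sg)\cap[1,m]$) is undisturbed. You identify this conversion, the orientation reversal behind $\rlmaj$, and the role of right-to-left minima. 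Where you diverge is the finish: the paper proves symmetry by $r$-pairing (a descent-preserving, content-transposing involution on words) and then extracts Schur coefficients by applying RSK to monomial coefficients, whereas you propose the Gessel quasisymmetric route, collapsing via $\sum_{T\in\syt(\lambda)}F_{\Des(T)}=s_\lambda$. Done correctly, your route is arguably cleaner, since it yields symmetry and the Schur expansion in a single stroke rather than in two separate arguments.

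Two points in your assembly, however, need repair. First, the order of operations: you standardize first and convert second, but the bijection underlying Theorem \ref{thm:eq-multiset} does not preserve the inverse descent set, so applying it after standardization scrambles the quasisymmetric indices attached to the standardized permutations; moreover \eqref{main-result} is only an identity of \emph{totals} for each fixed content, not of $F$-weighted sums. The fix is to convert at the word level, content by content (in the reversed, complemented, corner-preserving form), concluding that $\widetilde{H}_{n,m}$ equals the $\rlmaj$-form series identically; only then standardize, which is legitimate because after conversion the weight depends only on $\Des(\sg)$, and standardization preserves descent sets. Second, your final collapse conflates two different descent sets. Writing the converted sum as $\sum_{\pi\in\S_n}\wt(\Des(\pi))\,F_{\iDes(\pi)}(x)$, the weight is a function of $\Des(\pi)$ while the quasisymmetric index is $\iDes(\pi)$; these do not coincide, so the weight is certainly not ``constant on $\syt(\lambda)$'' as you assert. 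You need RSK to decouple them: under $\pi\leftrightarrow(P,Q)$ one has $\iDes(\pi)=\Des(P)$ and $\Des(\pi)=\Des(Q)$, whence the sum factors as $\sum_{\lambda}\bigl(\sum_{Q\in\syt(\lambda)}\wt(\Des(Q))\bigr)\bigl(\sum_{P\in\syt(\lambda)}F_{\Des(P)}(x)\bigr)$, and only now does $\sum_{P\in\syt(\lambda)}F_{\Des(P)}=s_\lambda$ finish the proof, with the $Q$-sum giving exactly the stated Schur coefficient. With these two corrections your argument is complete and stands as a valid alternative to the paper's $r$-pairing plus RSK-on-monomials finish.
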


\begin{proof}
We begin with a descent-starred multiset permutation $(\sg, S)$. Then we apply the bijection
\begin{align*}
\gamma = \text{complement} \circ \text{reverse} \circ \psi_{\beta,\el} \circ \text{reverse} \circ \text{complement}
\end{align*}
to the descent-starred multiset permutation $(\sg_{m+1}\ldots\sg_{n}, S \cap [m+1, n])$ for suitable $\beta, \el$.  Since preserves the rightmost letter, $\gamma$ preserves $\sg_{k+1}$ and sends the $\op{\inv}$ of $(\sg_{m+1}\ldots\sg_{n}, S \cap [m+1, n])$ to the $\op{\rlmaj}$ of the resulting descent-starred multiset permutation. Hence, $\widetilde{H}_{n, m}$ equals
\begin{align}
\label{mac-rlmaj}
&\sum_{\sg \in \{1,2,\ldots\}^n} x^{\sg} q^{\rlmaj_{[k+1, n]}(\sg)} t^{\maj_{[1, k]}(\sg)} \\
&\times \prod_{i = 1}^{|\Des(\sg) \cap [k+1, n]|} \left(1 + u/q^{i} \right) \prod_{j=1}^{|\Des(\sg) \cap [1, k]|} \left(1 + v/t^{j}\right)  .
\end{align}
For any composition $\alpha$, the coefficient of $x^{\alpha}$ in this expression is just the sum over $\sg$ that are permutations of the multiset $\{1^{\alpha_{1}}, 2^{\alpha_{2}}, \ldots \}$. We would like to show that the coefficients of $x^{\alpha}$ and $x^{\alpha^{(r)}}$ are equal, where $\alpha^{(r)}$ is obtained from $\alpha$ by switching $\alpha_{r}$ and $\alpha_{r+1}$. To do this, we apply a procedure known as \emph{$r$-pairing} to the permutation $\tau$. We illustrate $r$-pairing via the example in Figure \ref{fig:r-pairing}. To perform $r$-pairing on a sequence, we begin by temporarily ignoring all entries not equal to $r$ or $r+1$. Then we pair off adjacent occurrences of $r+1$ and $r$, ignoring previously paired entries and iterating this pairing procedure. When there are no more such pairs, we replace each un-paired occurrence of $r$ with an $r+1$ and vice versa. Finally, we re-insert the entries we had temporarily removed in their initial positions. We provide an example above with $r=2$.  

For our purposes, it is enough to know that $r$-pairing replaces $\sg$ with a permutation of the multiset $\{1^{\alpha_{1}}, \ldots, r^{\alpha_{r+1}}, (r+1)^{\alpha_{r}}, \ldots, \}$, and that this permutation has the same descent set as $\sg$. Therefore $r$-pairing does not alter any of the expressions in (\ref{mac-rlmaj}), so $\widetilde{H}_{n,m}$ is symmetric.

\begin{figure}
\begin{center}
\begin{align*}
&24231243331324123321 \\
&2\phantom{4}2\underline{3\phantom{1}2}\phantom{4}333\phantom{1}\underline{32}\phantom{41}23\underline{32}\phantom{1} \\
&2\phantom{4}2\phantom{\underline{3\phantom{1}2}}\phantom{4}33\underline{3\phantom{1}\phantom{\underline{32}}\phantom{41}2}3\phantom{\underline{32}}\phantom{1} \\
&2\phantom{4}2\phantom{\underline{3\phantom{1}2}}\phantom{4}33\phantom{\underline{3\phantom{1}\phantom{\underline{32}}\phantom{41}2}}3\phantom{\underline{32}}\phantom{1} \\
&3\phantom{4}3\phantom{\underline{3\phantom{1}2}}\phantom{4}22\phantom{\underline{3\phantom{1}\phantom{\underline{32}}\phantom{41}2}}2\phantom{\underline{32}}\phantom{1} \\
&3\phantom{4}3\phantom{\underline{3\phantom{1}2}}\phantom{4}22\underline{3\phantom{1}\phantom{\underline{32}\phantom{41}}2}2\phantom{\underline{32}}\phantom{1} \\
&3\phantom{4}3\underline{3\phantom{1}2}\phantom{4}223\phantom{1}\underline{32}\phantom{41}22\underline{32}\phantom{1} \\
&34331242231324122321
\end{align*}
\end{center}
 \caption{An example of $r$-pairing with $r=2$.}
\label{fig:r-pairing}
\end{figure}

Furthermore, if we apply the Robinson-Schensted-Knuth correspondence \cite{ec2} to each permutation $\sg$ involved in the coefficient of $x^{\lambda}$ for a partition $\lambda$, we see that the coefficient of $x^{\lambda}$ in $\widetilde{H}_{n,m}$ is
\begin{align*}
&K_{\lambda, (n-m, 1^m)} \sum_{T \in \syt(\lambda)} q^{\rlmaj_{[m+1, n]}(T)} t^{\maj_{[1, m]}(T)} \\
&\ \times \prod_{i = 1}^{|\Des(T) \cap [m+1, n]|} \left(1 + u/q^{i} \right) \prod_{j=1}^{|\Des(T) \cap [1, m]|} \left(1 + v/t^{j}\right) .
\end{align*}
Here $K_{\lambda, (n-m, 1^m)}$ is the Kostka number. Translating from the coefficient of $x^{\lambda}$ to the coefficient of $s_{\lambda}(x)$ exactly consists of removing this Kostka number, so the theorem follows.
\end{proof}

One consequence of Theorem \ref{thm:mac} is the identity
\begin{align}
\label{bijective-symmetry}
\widetilde{H}_{n, m}(x; q, t, u, v) &= \widetilde{H}_{n, n-m}(x; t, q, v, u).
\end{align}
This parallels the well-known identity
\begin{align}
\label{conjugate-id}
\tilde{H}_{\mu}(x;q,t) &= \tilde{H}_{\mu^{\prime}}(x;t,q)
\end{align}
for Macdonald polynomials. For Macdonald polynomials of hook shape, there is a direct bijective proof of \eqref{conjugate-id}. We would like to find a similar bijective proof of \eqref{bijective-symmetry}.

In future work, we hope to define and explored starred Macdonald polynomials for non-hook shapes. The full combinatorial formulation of Macdonald polynomials in \cite{hhl} essentially allows one to either star horizontally or vertically, giving analogs of Macdonald polynomials with three-variable polynomials for coefficients. At this point, we have been unable to define a four-variable generalization in the non-hook case that retains desirable properties such as symmetry. It would also be interesting to develop connections between these polynomials and Garsia-Haiman modules.

\bibliographystyle{alpha}
\bibliography{statistics}
\label{sec:biblio}

\end{document}